\DeclareMathOperator*{\argmin}{arg\,min}
\DeclareMathOperator*{\veco}{vec}
\DeclareMathOperator*{\cov}{cov}
\DeclareMathOperator*{\tr}{tr}
\DeclareMathOperator*{\diag}{diag}
\newtheorem{theorem}{Theorem}
\newtheorem{definition}{Definition}
\begin{document}

\title{A higher-order LQ decomposition for separable covariance models}
\author{David Gerard$^1$  and Peter Hoff$^{1,2}$  \\
Departments of Statistics$^1$ and Biostatistics$^2$ \\
University of Washington}  
\maketitle

 \let\thefootnote\relax\footnotetext{Email: gerard2@uw.edu,  pdhoff@uw.edu. This research was partially supported by NI-CHD grant R01HD067509.  }

\begin{abstract}
We develop a higher order generalization of the LQ decomposition and show that this decomposition plays an important role in likelihood-based estimation and testing for separable, or Kronecker structured, covariance models, such as the multilinear normal model. This role is analogous to that of the LQ decomposition in likelihood inference for the multivariate normal model. Additionally, this higher order LQ decomposition can be used to construct an alternative version of the popular higher order singular value decomposition for tensor-valued data. We also develop a novel generalization of the polar decomposition to tensor-valued data.
\end{abstract}
\section{Introduction}
There has been a recent surge of interest in methods for tensor-valued data in the machine learning, applied math, and statistical communities. Tensors, or multiway arrays, are higher order generalizations of vectors and matrices whose elements are indexed by more than two index sets. Analysis methods for tensor-valued data include tensor decompositions and statistical modeling. The former aims to express the tensor in terms of interpretable lower-dimensional components. The latter uncovers patterns through the lens of statistical inference in a parametric statistical model.

The work in the field of tensor decompositions is extensive (see \cite{kolda2009tensor} or \cite{cichockitensor} for a review). A common class of tensor decompositions are Tucker decompositions \citep{tucker1966some}, which, for an array $X \in \mathbb{R}^{p_1\times\cdots\times p_K}$ with entries $X_{[i_1,\ldots,i_K]}$, expresses $X$ as a product of a ``core'' array $S \in \mathbb{R}^{p_1\times\cdots\times p_K}$ and matrices $U_1,\ldots,U_K$ where $U_k \in \mathbb{R}^{p_k\times p_k}$, expressed as
\begin{align}
\label{equation:tucker.decomp}
X = (U_1,\ldots,U_K) \cdot S,
\end{align}
where ``$\cdot$'' is multilinear multiplication defined in Section \ref{section:holq}. Most Tucker decompositions impose orthogonality constraints on the $U_k$'s. One resulting tensor decomposition with such orthogonality constraints is the higher order singular value decomposition (HOSVD) of \cite{de2000multilinear,de2000best}, a generalization of the singular value decomposition (SVD). There are other generalizations of the SVD to tensors outside the Tucker decomposition framework \citep{de2008tensor,grasedyck2010hierarchical,kilmer2011factorization}. However, our work will focus on Tucker decompositions of the form (\ref{equation:tucker.decomp}), where the $U_k$'s have a variety of forms other than orthogonality.

A different perspective on tensor-valued data analysis uses statistical modeling, which aims to capture the dependencies between the entries of a tensor through a parametric model. One such model is the multilinear normal model \citep{hoff2011separable,ohlson2013multilinear,manceur2013maximum} --- also known as the ``array normal model'' or ``tensor normal model'' --- which is an extension of the matrix normal model \citep{srivastava1979introduction,dawid1981some}. A $p_1 \times \cdots \times p_K$ tensor $X$ follows a multilinear normal distribution if $\veco(X)$ is normally distributed with covariance $\Sigma_K \otimes \cdots \otimes \Sigma_1$, where ``$\otimes$'' is the Kronecker product and ``$\veco(\cdot)$'' is the vectorization operator. For $\Sigma_k = A_kA_k^T$, $k = 1,\ldots,K$, the multilinear normal model may be written
\begin{align}
\label{equation:mult.norm}
X \overset{d}{=} (A_1,\ldots,A_K) \cdot Z,
\end{align}
where $Z \in \mathbb{R}^{p_1\times\cdots\times p_K}$ contains independent and identically distributed (i.i.d.) standard normal entries. The multilinear normal model ``separates'' the covariances along the modes, or dimensions of $X$. That is, the dependencies along the $k$th mode are represented by a single covariance matrix, $\Sigma_k$. Models where the covariance matrix is Kronecker structured are thus often called ``separable covariance models''. Most results for the multilinear normal model can be easily generalized to array-variate elliptically contoured models with separable covariance \citep{akdemir2011array1}.

In Section \ref{section:holq}, we derive a novel tensor decomposition, a type of Tucker decomposition, whose components provide the maximum likelihood estimators (MLEs) of the parameters in the mean zero multilinear normal model, and array-variate elliptically contoured models with separable covariance in general. This tensor decomposition is a generalization of the LQ matrix decomposition to multiway arrays, and so we call it the incredible Higher Order LQ decomposition (incredible HOLQ, or just HOLQ). One can view the LQ decomposition as taking the form
\begin{align*}
X = \ell L Q I_n\in \mathbb{R}^{p \times n},
\end{align*}
where $\ell > 0$, $Q$ has orthonormal rows, $L$ is a lower triangular matrix with positive diagonal elements and unit determinant, and $I_n$ is the identity matrix. The HOLQ takes the form
\begin{align*}
X = \ell(L_1,\ldots,L_K,I_n)\cdot Q \in\mathbb{R}^{p_1\times\cdots\times p_K \times n},
\end{align*}
where $\ell > 0$, each $L_k$ is a lower triangular matrix with positive diagonal elements and unit determinant, and $Q \in \mathbb{R}^{p_1 \times \cdots \times p_K \times n}$ has certain orthogonality properties which generalize the orthonormal rows property of the LQ decomposition. Section \ref{section:mult.norm} shows the close relationship between the HOLQ and likelihood inference in the multilinear normal model: In Section \ref{section:MLE}, we show that each $L_k$ matrix in the  HOLQ is the Cholesky square root of the MLE for the $k$th component covariance matrix, $\Sigma_k$, in the multilinear normal model (\ref{equation:mult.norm}). This relationship is analogous to the correspondence between the LQ decomposition and the MLE in the multivariate normal model. 

In the same way that likelihood estimation in the multilinear normal model is connected to the HOLQ, likelihood inference in submodels of the unconstrained multilinear normal model is connected to other decompositions where the component matrices have certain structures. In Section \ref{section:holq.junior}, we consider constraining $\Sigma_k$ to be diagonal. This has the interpretation of statistical independence along the $k$th mode and corresponds to constraining $L_k$ to be diagonal in the related tensor decomposition. We also consider constraining the diagonal of the lower triangular Cholesky square root of $\Sigma_k$ to be the vector of ones, which relates to a covariance model used in time series analysis.  We label as ``HOLQ juniors'' the class of decompositions that correspond to submodels of the unrestricted mean zero multilinear normal model. In Section \ref{section:LRT}, we use HOLQ juniors to develop a class of likelihood ratio tests for covariance models in elliptically contoured random arrays with separable covariance.

Other tensor decompositions related to the  HOLQ are discussed in Section \ref{section:other.tensor}. In Section \ref{section:isvd} we use the HOLQ to create a new higher order analogue to the SVD where each mode has singular values and vectors separated from the core array. Since this SVD is derived from the incredible HOLQ, we call it the incredible SVD (ISVD). The ISVD may be viewed as a core rotation of the HOSVD. In Section \ref{section:ihop} we use a novel minimization formulation of the polar decomposition to generalize it to tensors.
\section{The incredible HOLQ}
\label{section:holq}
Let $X \in \mathbb{R}^{p \times n}$ be of rank $p$ where $p \leq n$. Recall the LQ decomposition,
\begin{align*}
X = LQ,
\end{align*}
where $L \in G_{p}^+$, the set of $p$ by $p$ lower triangular matrices with positive diagonal elements, and $Q^T \in \mathcal{V}_{p,n}$, the Stiefel manifold of $n$ by $p$ matrices with orthonormal columns. It is common to formulate the LQ decomposition as a Gram-Schmidt orthogonalization of the rows of $X$. We instead consider an alternative formulation of the LQ decomposition as a minimization problem:
\begin{theorem}
\label{theorem:lq.reform}
Let $\mathcal{G}_{p}^+$ denote the set of $p$ by $p$ lower triangular matrices with positive diagonal elements and unit determinant. Let
\begin{align}
\label{equation:lq.min}
L = \argmin_{\tilde{L} \in \mathcal{G}_{p}^+}||\tilde{L}^{-1}X||,
\end{align}
where $||\cdot||$ is the Frobenius norm. Set $\ell = ||L^{-1}X||$ and $Q = L^{-1}X / \ell$. Then $X = \ell LQ$ is the LQ decomposition of $X$.
\end{theorem}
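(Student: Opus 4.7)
The plan is to recast the constrained minimization over $\mathcal{G}_p^+$ as a minimization over positive definite matrices of unit determinant, and then pin down the minimizer by a short AM--GM argument on eigenvalues. The uniqueness of the Cholesky factorization will let me move freely between these two parameterizations.

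First, I would rewrite the objective by cyclicity of the trace as $\|\tilde L^{-1}X\|^2 = \tr((\tilde L\tilde L^T)^{-1} XX^T)$. Since $X \in \mathbb{R}^{p\times n}$ has full row rank, $S := XX^T$ is positive definite, and the Cholesky map $\tilde L \mapsto M := \tilde L\tilde L^T$ is a bijection from $\mathcal{G}_p^+$ onto $\{M \succ 0 : \det M = 1\}$. So (\ref{equation:lq.min}) is equivalent to minimizing $\tr(M^{-1}S)$ over that manifold. Substituting $N := S^{1/2} M^{-1} S^{1/2}$ gives $\tr(M^{-1}S) = \tr N$ with $\det N = \det S$ constant, and the AM--GM inequality applied to the eigenvalues of $N$ yields $\tr N \geq p \det(S)^{1/p}$ with equality iff $N = \det(S)^{1/p} I_p$. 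Back-solving pins down the unique minimizer $M^\ast = S/\det(S)^{1/p}$, whose unit-determinant Cholesky factor is $L = L_0/\det(L_0)^{1/p}$, where $L_0$ is the Cholesky factor of $S$ --- equivalently, the $L$-factor in the classical LQ decomposition $X = L_0 Q_0$ with $Q_0 Q_0^T = I_p$.

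Finally, with $L$ identified, direct substitution gives $L^{-1}X = \det(L_0)^{1/p} Q_0$, so $\ell = \|L^{-1}X\| = \sqrt{p}\,\det(L_0)^{1/p}$ and $Q = L^{-1}X/\ell = Q_0/\sqrt{p}$; the latter has unit Frobenius norm and $QQ^T$ proportional to $I_p$, which is exactly the scaled orthogonality property to be generalized by the HOLQ. Multiplying through gives $\ell L Q = X$, and $L$ has positive diagonal and unit determinant by construction. The main obstacle is the AM--GM step, namely verifying that the bound is attained only when $N$ is a scalar multiple of the identity, so that uniqueness of the minimizer transports through the Cholesky bijection back to $\mathcal{G}_p^+$. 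Everything else is routine algebraic bookkeeping.
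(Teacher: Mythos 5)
Your proof is correct, and it reaches the same minimizer $M^\ast = XX^T/\det(XX^T)^{1/p}$ as the paper, but by a genuinely different route. The paper sets up a Lagrangian $\tr(VXX^T)-\lambda\log|V|$, temporarily ignores the symmetry constraint on $V$, finds the stationary point, and then invokes strict concavity of $\log\det$ to certify a global minimum; it establishes uniqueness only implicitly through convexity. You instead pass through the change of variables $N = S^{1/2}M^{-1}S^{1/2}$, which turns the constrained problem into minimizing $\tr N$ subject to $\det N = \det S$ fixed, and the AM--GM inequality on the eigenvalues of $N$ delivers the lower bound $p\det(S)^{1/p}$, global optimality, \emph{and} uniqueness (equality forces all eigenvalues equal, hence $N$ a multiple of $I_p$ since $N$ is symmetric) in a single stroke --- no calculus, no Lagrange multipliers, and no need to wave away the symmetry constraint. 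The Cholesky bijection between $\mathcal{G}_p^+$ and unit-determinant positive definite matrices transports uniqueness back to $L$, exactly as you say. One further point in your favor: by tracking the normalization explicitly you obtain $\ell = \sqrt{p}\det(L_0)^{1/p}$ and $Q = Q_0/\sqrt{p}$, i.e.\ $QQ^T = I_p/p$ with $\|Q\|=1$, which is the ``scaled'' orthonormality the paper needs for consistency with its definition of scaled all-orthonormality; the paper's own proof writes $QQ^T = I_p$, which cannot hold for $p>1$ under the Frobenius normalization $\|Q\|=1$ (one finds $\ell^2 LL^T = pXX^T$, not $XX^T$). Your version is the cleaner and more careful of the two.
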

\begin{proof}
By the uniqueness of the LQ decomposition \citep[Proposition 5.2]{eaton1983multivariate}, it suffices to show that $Q$ has orthonormal rows. We have $QQ^T = I_p \Leftrightarrow L^{-1}XX^TL^{-T}/\ell^2 = I_p \Leftrightarrow XX^T = \ell^2LL^T$. Also note that the solution in (\ref{equation:lq.min}) is equivalent to finding the matrix $\tilde{S}$ that satisfies $\tilde{S} = LL^T = \argmin_{S \in \mathcal{S}_{p}^{1}}\tr(S^{-1}XX^T)$, where $\mathcal{S}_{p}^{1}$ is the set of $p$ by $p$ positive definite matrices with unit determinant. If we can show that $\tilde{S} = XX^T / |XX^T|^{1/p}$ then we have shown that $Q$ has orthonormal rows. Using Lagrange multipliers, we must minimize $\tr(S^{-1}XX^T) - \lambda\log|S|$ in $S \in \mathcal{S}_{p}^+$, the set of $p$ by $p$ positive definite matrices, and $\lambda \in \mathbb{R}$. Equivalently, we could also minimize $\tr(VXX^T) - \lambda\log|V|$, where $V = S^{-1}$. Temporarily ignoring the symmetry of $V$, taking derivatives \citep[chapter 8]{magnus1999matrix} and setting equal to zero we have
\begin{align*}
&XX^T - \lambda V^{-1} = 0 \text{ and } |V| = 1\\
&\Leftrightarrow \lambda V^{-1} = XX^T \text{ and } |V| = 1\\
&\Leftrightarrow V^{-1} = XX^T / |XX^T|^{1/p} \text{ and } \lambda = |XX^T|^{1/p}\\
&\Leftrightarrow S = XX^T / |XX^T|^{1/p} \text{ and } \lambda = |XX^T|^{1/p}.
\end{align*}
Since $\log|V|$ is strictly concave (Theorem 25 of Chapter 11 of \cite{magnus1999matrix} or Theorem 7.6.7 of \cite{horn2012matrix}), $\tr(VXX^T)$ is linear, and $\lambda = |XX^T|^{1/p} > 0$, we have that $\tr(VXX^T) - \lambda\log|V|$ is a convex function in $V$. Hence, $S = XX^T / |XX^T|^{1/p}$ is a global minimum (c.f. Theorem 13 of Chapter 7 in \cite{magnus1999matrix}). Since $XX^T / |XX^T|^{1/p}$ is symmetric and positive definite, it is also a global minimum over the space of symmetric positive definite matrices.
\end{proof}

In (\ref{equation:lq.min}), we are ``dividing out'' $L$ from the rows of $X$. In this way, we can consider the formulation of the LQ decomposition in Theorem \ref{theorem:lq.reform} as finding the $L \in \mathcal{G}_{p}^+$ that accounts for the greatest amount of heterogeneity in the rows of $X$. The goal of accounting for the heterogeneity in each mode of a multidimensional array will lead to our generalization of the LQ decomposition to tensors, where $X \in \mathbb{R}^{p_1\times\cdots\times p_K \times n}$. 
\begin{definition}
If 
\begin{align}
\label{equation:array.opt}
(L_1,\ldots,L_K) = \argmin_{\tilde{L}_k \in \mathcal{G}_{p_k}^+,\ k = 1,\ldots,K}|| (\tilde{L}_1^{-1},\ldots,\tilde{L}_K^{-1},I_n) \cdot X||
\end{align}
then 
\begin{align}
\label{equation:holq}
X = \ell (L_1,\ldots,L_K,I_n) \cdot Q
\end{align}
is an incredible HOLQ, where $\ell = || (L_1^{-1},\ldots,L_K^{-1},I_n) \cdot X||$ and $Q = (L_1^{-1},\ldots,\allowbreak L_K^{-1},I_n) \cdot X / \ell$.
\end{definition}
Here, $(L_1,\ldots,L_K,I_n) \cdot Q$ denotes \emph{multilinear multiplication} of $Q$ by the list of matrices $(L_1,\allowbreak\ldots,L_K,I_n)$ \citep{de2008tensor}, also known as the \emph{Tucker product} \citep{kofidis2001tensor,hoff2011separable}. That is, if $X = (L_1,\ldots,L_K,I_n) \cdot Q$ then
\begin{align*}
X_{[j_1,\ldots,j_K,j_{K+1}]} = \sum_{i_1,\ldots,i_K = 1}^{p_1,\ldots,p_K}Q_{[i_1,\ldots,i_{K},j_{K+1}]}L_{1[j_1,i_1]}\cdots L_{k[j_K,i_K]}.
\end{align*}
Multilinear multiplication has the following useful properties: If (\ref{equation:holq}) holds, then
\begin{align}
&X_{(k)} = L_kQ_{(k)}(I_n \otimes L_K^T \otimes \cdots \otimes L_{k+1}^T \otimes L_{k-1}^T \otimes \cdots \otimes L_1^T) \text{ and} \label{equation:matricization}\\
&\veco(X) = (I_n \otimes L_K^T \otimes \cdots \otimes L_1^T)\veco(Q), \label{equation:vectorization}
\end{align}
where $X_{(k)}$ is the unfolding of the array $X$ into a $p_k$ by $n\prod_{i \neq k}^Kp_i$ matrix and $\veco(X)$ is the unfolding of the array $X$ into a $n\prod_{k=1}^Kp_k$ dimensional vector \citep{kolda2009tensor}. We will generally denote $I_n \otimes L_K \otimes \cdots \otimes L_{k+1} \otimes L_{k-1} \otimes \cdots \otimes L_1$ by $L_{-k}$ and denote $\prod_{k=1}^Kp_k$ by $p$.

We note that such a minimizing $(L_1,\ldots,L_K)$ in (\ref{equation:array.opt}) may not exist. This is discussed further in Section \ref{section:discussion}. When such a minimizer does exist, we may use (\ref{equation:matricization}) and Theorem \ref{theorem:lq.reform} to develop a block coordinate descent algorithm \citep{tseng2001convergence} to solve the minimization problem (\ref{equation:array.opt}): At iteration $i$, we fix $L_k$ for $k \neq i$. We then find the minimizer in $L_i \in \mathcal{G}_{p_i}^+$ of
\begin{align*}
||L_i^{-1}X_{(i)}L_{-i}^{-T}||,
\end{align*}
which, by Theorem \ref{theorem:lq.reform} is the $L$ matrix in the LQ decomposition of $X_{(i)}L_{-i}^{-T} = \ell L Q$. This algorithm is presented in Algorithm \ref{algorithm:flip.flop}. A slight improvement on Algorithm \ref{algorithm:flip.flop} is presented in Algorithm \ref{algorithm:holq} where we also update the core array $Q$ of the  HOLQ while updating the component lower triangular matrices. Unlike Algorithm \ref{algorithm:flip.flop}, Algorithm \ref{algorithm:holq} does not require the calculation of the inverse of $L_k$ or the extra matrix multiplication of $X_{(k)}L_{-k}^{-T}$ at each step. A proof of the equivalence between Algorithms \ref{algorithm:flip.flop} and \ref{algorithm:holq} can be found in the appendix.

\begin{algorithm}[h!]
\caption{Block coordinate descent for the HOLQ.}
\label{algorithm:flip.flop}
  \begin{algorithmic}
    \STATE Given $X \in \mathbb{R}^{p_1 \times \cdots \times p_K \times n}$, initialize:
    \STATE $L_k \leftarrow L_{k0} \in \mathcal{G}_{p_k}^+$ for $k = 1,\ldots,K$.
    \STATE $\ell \leftarrow ||(L_{10}^{-1},\ldots,L_{K0}^{-1},I_n) \cdot X||$
    \REPEAT
    \FOR{$k \in \{1,\ldots,K\}$}
    \STATE LQ decomposition of $X_{(k)}L_{-k}^{-T} = LZ^T$
    \STATE $L_k \leftarrow L / |L|^{1/p_k}$
    \ENDFOR
    \UNTIL{Convergence.}
    \STATE Set $\ell \leftarrow ||(L_1^{-1},\ldots,L_K^{-1},I_n)\cdot X||$
    \STATE Set $Q \leftarrow (L_1^{-1},\ldots,L_K^{-1},I_n)\cdot X / \ell$
    \RETURN $\ell$, $Q$, and $L_k$ for $k = 1,\ldots,K$.
  \end{algorithmic}
\end{algorithm}

\begin{algorithm}[t!]
\caption{Orthogonalized block coordinate descent for the HOLQ.}
\label{algorithm:holq}
  \begin{algorithmic}
    \STATE Given $X \in \mathbb{R}^{p_1 \times \cdots \times p_K \times n}$, initialize:
    \STATE $L_k \leftarrow L_{k0} \in \mathcal{G}_{p_k}^+$ for $k = 1,\ldots,K$.
    \STATE $\ell \leftarrow ||(L_{10}^{-1},\ldots,L_{K0}^{-1},I_n) \cdot X||$
    \STATE $Q \leftarrow (L_{10}^{-1},\ldots,L_{K0}^{-1},I_n) \cdot X/\ell$
    \REPEAT
    \FOR{$k \in \{1,\ldots,K\}$}
    \STATE LQ decomposition of $Q_{(k)} = LZ$
    \STATE $Q_{(k)} \leftarrow Z$
    \STATE $L_k \leftarrow L_kL$
    \STATE Re-scale:
    \begin{description}[noitemsep,nolistsep]
    \item $\ell \leftarrow \ell |L_k|^{1/p_k} ||Q||$
    \item $L_k \leftarrow L_k / |L_k|^{1/p_k}$
    \item $Q \leftarrow Q/||Q||$
    \end{description}
    \ENDFOR
    \UNTIL{Convergence.}
    \RETURN $\ell$, $Q$, and $L_k$ for $k = 1,\ldots,K$.
  \end{algorithmic}
\end{algorithm}

There are two things to note about these algorithms. First, at each iteration we are reducing the criterion function $||(L_1^{-1},\ldots,L_K^{-1},I_n)\cdot X||$. Second, at each iteration of Algorithm \ref{algorithm:holq}, we are orthonormalizing the rows of the core array, $Q$. Hence, the core array $Q$ of any fixed point of this algorithm, including that of the HOLQ, must have a property which we call \emph{scaled all-orthonormality}:

\begin{definition}
A $p_1 \times \cdots \times p_K \times n$ tensor $Q$ is \emph{scaled all-orthonormal} if
\begin{align}
\label{equation:scaled.orthonormal}
Q_{(k)}Q_{(k)}^T = I_{p_k}/p_k \text{ for all } k = 1,\ldots,K.
\end{align}
\end{definition}

\begin{theorem}
Let $X = \ell (L_1,\ldots,L_K,I_n) \cdot Q$ be an incredible HOLQ. Then the core array $Q$ is scaled all-orthonormal.
\end{theorem}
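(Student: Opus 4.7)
The plan is to use the defining joint minimization property of the HOLQ to derive a coordinate-wise optimality condition in each mode, and then apply Theorem \ref{theorem:lq.reform} to reduce the claim in each mode to the matrix case already settled in its proof.

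First I would fix an index $k \in \{1,\ldots,K\}$ and freeze the other factors $L_j$ for $j \neq k$. By the matricization identity (\ref{equation:matricization}) and the invariance of the Frobenius norm under unfolding, the HOLQ objective in (\ref{equation:array.opt}) equals $\|L_k^{-1}X_{(k)}L_{-k}^{-T}\|$. Setting $Y_k := X_{(k)}L_{-k}^{-T}$, this shows that as part of the joint minimizer the matrix $L_k$ must itself solve the matrix problem $L_k = \argmin_{\tilde{L} \in \mathcal{G}_{p_k}^+}\|\tilde{L}^{-1}Y_k\|$.

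Next I would apply Theorem \ref{theorem:lq.reform} to $Y_k$. The proof of that theorem establishes $L_kL_k^T = Y_kY_k^T / |Y_kY_k^T|^{1/p_k}$ at the minimizer, from which a short trace computation gives $\|L_k^{-1}Y_k\|^2 = p_k |Y_kY_k^T|^{1/p_k}$. Writing $R_k := L_k^{-1}Y_k/\|L_k^{-1}Y_k\|$, the two identities combine to yield $R_kR_k^T = L_k^{-1}Y_kY_k^T L_k^{-T}/\|L_k^{-1}Y_k\|^2 = I_{p_k}/p_k$.

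Finally I would identify $R_k$ with $Q_{(k)}$. By (\ref{equation:matricization}) applied to the core array of (\ref{equation:holq}), $Q_{(k)} = L_k^{-1}X_{(k)}L_{-k}^{-T}/\ell = L_k^{-1}Y_k/\ell$, and the definition $\ell = \|(L_1^{-1},\ldots,L_K^{-1},I_n)\cdot X\|$ combined with the same Frobenius-under-unfolding identity gives $\ell = \|L_k^{-1}Y_k\|$. Hence $Q_{(k)} = R_k$ and $Q_{(k)}Q_{(k)}^T = I_{p_k}/p_k$. Since $k$ was arbitrary, this proves scaled all-orthonormality.

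The main obstacle is nothing conceptual but rather the careful bookkeeping of the normalization: tracking how the unit-determinant constraint on $\mathcal{G}_{p_k}^+$ produces the factor $1/p_k$ (rather than simply $I_{p_k}$) in $R_kR_k^T$. This requires briefly unpacking the $\ell$-computation inside the proof of Theorem \ref{theorem:lq.reform} for each mode; once this scaling is verified, the entire argument is a mode-by-mode invocation of the matrix result.
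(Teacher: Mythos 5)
Your proof is correct and follows essentially the same route as the paper's: the paper disposes of the theorem by observing that scaled all-orthonormality is ``a direct consequence of the LQ step in Algorithm \ref{algorithm:holq},'' which is precisely the mode-wise coordinate optimality that you make explicit by unfolding along mode $k$ and invoking Theorem \ref{theorem:lq.reform}. Your version is in fact more detailed than the paper's one-line argument --- the identity $L_kL_k^T = Y_kY_k^T/|Y_kY_k^T|^{1/p_k}$, the trace computation producing the $1/p_k$ factor, and the identification $\ell = \|L_k^{-1}Y_k\|$ are exactly the bookkeeping the paper leaves implicit.
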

\begin{proof}
This is a direct consequence of the LQ step in Algorithm \ref{algorithm:holq}.
\end{proof}
Note that we divide by $p_k$ in (\ref{equation:scaled.orthonormal}) because of the constraint that $||Q|| = 1$. This scaled all-orthonormality property generalizes the orthonormal rows property in the LQ decomposition.

Of course, we could have instead generalized the RQ decomposition, where for $X \in \mathbb{R}^{p\times n}$ we have $X = RZ$ for $R^T \in \mathcal{G}_{p_k}^+$ and $Z^T \in \mathcal{V}_{p,n}$. For $X \in \mathbb{R}^{p_1\times\cdots\times p_K \times n}$, if $X = \ell(L_1,\ldots,L_K,I_n)\cdot Q$ is the  HOLQ of of $X$, we then take the RQ decomposition of each component $L_k = R_kZ_k$, and set $r = \ell||(Z_1,\ldots,Z_K,I_n)\cdot Q||$ and $Z = \ell(Z_1,\ldots,Z_K,I_n)\cdot Q/r$, then $X = r(R_1,\ldots,R_K,I_n)\cdot Z$ is a higher order RQ (HORQ) of $X$, where $Z$ is scaled all-orthonormal. One could instead have started with a similar minimization formulation of the RQ as we did for the LQ (Theorem \ref{theorem:lq.reform}), then generalize to tensors as we did for the HOLQ (\ref{equation:holq}), and one would obtain the same HORQ as the one we derive from the HOLQ.
\section{The incredible HOLQ for separable covariance inference}
\label{section:mult.norm}
\subsection{Maximum likelihood estimation}
\label{section:MLE}

The LQ decomposition of a data matrix has a close relationship to maximum likelihood inference under the multivariate normal model. Assume a data matrix $X \in \mathbb{R}^{p \times n}$ was generated from a $N_{p \times n}(0,I_n \otimes \Sigma)$ distribution for some $\Sigma$ symmetric and positive definite. That is, the columns of $X$ are assumed to be independently distributed $N_p(0,\Sigma)$ random vectors. The MLE of $\Sigma$ is $XX^T/n$, and so is proportional to $XX^T = LQQ^TL^T = LL^T$, where $X = LQ$ is the LQ decomposition of $X$.

 This result carries over to the multilinear normal model (\ref{equation:mult.norm}) using the  HOLQ. Assume the data array $X \in \mathbb{R}^{p_1\times\cdots\times p_K \times n}$ follows a multilinear normal model, $X \sim \allowbreak N_{p_1\times\cdots\times p_K \times n}(0,\sigma^2I_n\otimes\Sigma_K\otimes\cdots\otimes\Sigma_1)$. That is,
\begin{align}
\label{equation:mult.norm.model}
&X \overset{d}{=} \sigma(\Sigma_1^{1/2},\ldots,\Sigma_K^{1/2},I_n)\cdot Z,
\end{align}
 where $Z \in \mathbb{R}^{p_1\times\cdots\times p_K \times n}$ has i.i.d. standard normal entries and $\Sigma_k^{1/2}$ is the lower triangular Cholesky square root matrix of $\Sigma_k$ for $k = 1,\ldots,K$. Here, we use the identifiable parameterization of \cite{gerard2014equivariant} where $\Sigma_k \in \mathcal{S}_{p_k}^1$ for $k = 1,\ldots,K$ and $\sigma^2 > 0$. The following theorem shows that the MLE of $(\sigma^2,\Sigma_1,\ldots,\Sigma_K)$ can be recovered from the  HOLQ of $X$.

\begin{theorem}
\label{theorem:holq.mle}
Let $X = \ell(L_1,\ldots,L_K,I_n)\cdot Q$ be the incredible HOLQ of $X$. Then under the model (\ref{equation:mult.norm.model})
\begin{enumerate}[noitemsep,nolistsep]
\item The MLE of $\Sigma_k$ is $\hat{\Sigma}_k = L_kL_k^T$ for $k = 1,\ldots,K$,
\item The MLE of $\sigma^2$ is $\hat{\sigma}^2 = \ell^2 / (np)$,
\item The maximized likelihood is equal to 
\begin{align*}
\left(2 \pi \hat{\sigma}^2\right)^{-np/2}e^{-np/2} = \left(2 \pi \ell^2/(np)\right)^{-np/2}e^{-np/2}.
\end{align*}
\end{enumerate}
\end{theorem}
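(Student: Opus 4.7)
The plan is to write down the Gaussian log-likelihood explicitly, profile out $\sigma^2$, and then observe that the remaining optimization over the Cholesky factors of $\Sigma_1,\ldots,\Sigma_K$ is exactly the HOLQ optimization (\ref{equation:array.opt}). First, I would reparameterize each $\Sigma_k \in \mathcal{S}_{p_k}^1$ via its lower triangular Cholesky factor $A_k$; since $|\Sigma_k|=1$, this gives a bijection $\Sigma_k \leftrightarrow A_k \in \mathcal{G}_{p_k}^+$. Vectorizing (\ref{equation:mult.norm.model}) yields $\veco(X) \sim N_{np}(0, \sigma^2 I_n \otimes \Sigma_K \otimes \cdots \otimes \Sigma_1)$. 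Using the Kronecker determinant identity together with $|\Sigma_k|=1$, the determinant of the covariance collapses to $(\sigma^2)^{np}$; using (\ref{equation:vectorization}) together with $\Sigma_k^{-1} = A_k^{-T}A_k^{-1}$, the Mahalanobis quadratic form becomes $\sigma^{-2}\|(A_1^{-1},\ldots,A_K^{-1},I_n)\cdot X\|^2$. So the log-likelihood reduces to $-\tfrac{np}{2}\log(2\pi\sigma^2) - \tfrac{1}{2\sigma^2}\|(A_1^{-1},\ldots,A_K^{-1},I_n)\cdot X\|^2$.

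Next, for any fixed $(A_1,\ldots,A_K)$, maximizing over $\sigma^2$ gives $\hat\sigma^2(A) = \|(A_1^{-1},\ldots,A_K^{-1},I_n)\cdot X\|^2/(np)$. Substituting back produces a profile log-likelihood that is a strictly decreasing function of $\|(A_1^{-1},\ldots,A_K^{-1},I_n)\cdot X\|^2$. Hence the MLE of $(A_1,\ldots,A_K)$ is the minimizer of this quantity over $\mathcal{G}_{p_1}^+\times\cdots\times\mathcal{G}_{p_K}^+$, which by the definition of the incredible HOLQ is precisely $(L_1,\ldots,L_K)$ with minimum value $\ell^2$. Reading off, $\hat\Sigma_k = L_kL_k^T$ and $\hat\sigma^2 = \ell^2/(np)$; substituting $\hat\sigma^2$ back into the density (the quadratic form contributes $np/2$ regardless) yields the claimed form $(2\pi\hat\sigma^2)^{-np/2}e^{-np/2}$ of the maximized likelihood.

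The only real obstacle is bookkeeping around the unit-determinant parameterization: one must verify that the Cholesky factor of a unit-determinant positive definite matrix lies in $\mathcal{G}_{p_k}^+$ (and conversely), and that the Kronecker determinant really produces $(\sigma^2)^{np}$ without any surviving $|\Sigma_k|$ factors. Everything else is a direct unwinding of definitions, followed by the standard trick of profiling out the scale parameter before invoking the HOLQ minimization.
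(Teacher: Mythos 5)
Your proposal is correct and follows essentially the same route as the paper's proof: write the log-likelihood as $-\tfrac{np}{2}\log(\sigma^2) - \tfrac{1}{2\sigma^2}\|(\Sigma_1^{-1/2},\ldots,\Sigma_K^{-1/2},I_n)\cdot X\|^2$, profile out $\sigma^2$, and recognize the remaining minimization over $\mathcal{G}_{p_1}^+\times\cdots\times\mathcal{G}_{p_K}^+$ as exactly the HOLQ problem (\ref{equation:array.opt}). The extra bookkeeping you flag (the Kronecker determinant collapsing to $(\sigma^2)^{np}$ under the unit-determinant parameterization, and the Cholesky bijection) is routine and is simply left implicit in the paper.
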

\begin{proof}
The log-likelihood is proportional to 
\begin{align*}
\frac{-np}{2}\log\left(\sigma^2\right) - \frac{1}{2\sigma^2} ||(\Sigma_1^{-1/2},\ldots,\Sigma_K^{-1/2},I_n)\cdot X||^2,
\end{align*}
where $\Sigma_k^{1/2}$ is the lower triangular Cholesky square root matrix of $\Sigma_k$. Holding the $\Sigma_k$'s fixed, taking a derivative of $\sigma^2$ and setting equal to zero, we solve for $\sigma^2$ and obtain $\hat{\sigma}^2 = ||(\Sigma_1^{-1/2},\ldots,\Sigma_K^{-1/2},I_n)\cdot X||^2 / (np)$. A second derivative test confirms this is the global maximizer for any fixed $\Sigma_1,\ldots,\Sigma_K$. The profiled likelihood is then
\begin{align}
\label{equation:maximized.like}
&\left(2\pi\hat{\sigma}^2\right)^{-np/2}\exp\left\{-\frac{1}{2\hat{\sigma}^2} ||(\Sigma_1^{-1/2},\ldots,\Sigma_K^{-1/2},I_n)\cdot X||^2\right\} \nonumber\\
&=\left(2\pi\hat{\sigma}^2\right)^{-np/2}\exp\left\{-\frac{1}{2\hat{\sigma}^2} \hat{\sigma}^2 np\right\}\nonumber\\
&=\left(2 \pi \hat{\sigma}^2\right)^{-np/2}e^{-np/2}.
\end{align}
Thus, to maximize the likelihood, we must minimize $\hat{\sigma}^2 = \frac{1}{np}|| (\Sigma_1^{-1/2},\ldots,  \allowbreak \Sigma_K^{-1/2}, \allowbreak I_n)\cdot X||^2$ in $\Sigma_k^{1/2} \in \mathcal{G}_{p_k}^+$ for $k = 1,\ldots,K$. This is the same as the minimization problem solved by the HOLQ in (\ref{equation:array.opt}). Hence, the MLE of $\Sigma_k$ is $\hat{\Sigma}_k = L_kL_k^T$. This in turn implies that $\hat{\sigma}^2 = ||(\hat{\Sigma}_1^{-1/2},\ldots,\hat{\Sigma}_K^{-1/2},I_n)\cdot X||^2 / (np)  = ||(L_1^{-1},\ldots,L_K^{-1},I_n)\cdot X||^2 / (np) = \ell^2/(np)$. We may plug $\hat{\sigma}^2 = \ell^2/(np)$ into (\ref{equation:maximized.like}) to obtain the final part of the theorem.
\end{proof}

This relationship with the multilinear normal model extends to any array-variate elliptically contoured model with separable covariance. Using our identifiable parameterization, $X$ is a mean zero elliptically contoured random array with separable covariance if its density has the form
\begin{align*}
f(x|\sigma^2,\Sigma_1,\ldots,\Sigma_K) \propto (\sigma^2)^{-p/2}g(||(\Sigma_1^{-1/2},\ldots,\Sigma_K^{-1/2})\cdot x||^2/\sigma^2),
\end{align*}
for some known $g:\mathbb{R}^+ \rightarrow \mathbb{R}^+$. Using a general result of \cite{anderson1986maximum} (see \ref{section:appendix.anderson}), the MLE of $\sigma^2(\Sigma_K \otimes \cdots \otimes \Sigma_1)$ can be shown to be proportional to the MLE under the multilinear normal model. This in turn implies that the MLEs of the component covariance matrices in separable elliptically contoured distributions have the same relationship with the  HOLQ as in the multilinear normal model. That is, $\hat{\Sigma}_k = L_kL_k^T$ where $X = \ell(L_1,\ldots,L_K,I_n)\cdot Q$. Only the estimation of the scale $\sigma^2$ might be different, depending on the function $g$.


The MLEs of $\sigma^2$ and the $\Sigma_k$'s depend only on $\ell$ and the $L_k$'s, not $Q$. This suggests that the core array $Q$ might be ancillary with respect to the covariance parameters $\Sigma_1,\ldots,\Sigma_K$ and $\sigma^2$, that is, the distribution of $Q$ might not depend on the parameter values. In the next paragraph, we will prove that this is indeed the case, but to do so we first introduce a group of transformations that acts transitively on the parameter space. Consider the group 
\begin{align*}
\mathcal{G} = \{(a,A_1,\ldots,A_K) : a > 0, A_k \in \mathcal{G}_{p_k}^+ \text{ for } k=1,\ldots,K\},
\end{align*}
where the group operation is component-wise multiplication. For example, if $(a,A_1,\ldots,A_K)$, $(b,B_1,\ldots,B_K)\in \mathcal{G}$, then we have
\begin{align*}
(a,A_1,\ldots,A_K)(b,B_1,\ldots,B_K) = (ab,A_1B_1,\ldots,A_KB_K).
\end{align*}
The group acts on the sample space by
\begin{align*}
X \mapsto a(A_1,\ldots,A_K,I_n)\cdot X.
\end{align*}
The following theorem shows that under this group action, the core array of the HOLQ, if unique, is maximally invariant (uniqueness is discussed briefly in Section \ref{section:discussion}). More generally, this theorem states that the set of core arrays of fixed points from Algorithm \ref{algorithm:holq} is a maximally invariant statistic. In other words, two arrays are in the same orbit of $\mathcal{G}$ if and only if the set of core arrays of fixed points of Algorithm \ref{algorithm:holq} are the same.
\begin{theorem}
\label{theorem:max.inv}
Let $X$ and $Y$ be in $\mathbb{R}^{p_1\times\cdots p_K\times n}$. Let $\mathcal{Q}_X$  and $\mathcal{Q}_Y$ be the set of core arrays from fixed points of Algorithm \ref{algorithm:holq} for $X$ and $Y$, respectively. Then $\mathcal{Q}_X = \mathcal{Q}_Y$ if and only if there exist  $c > 0$ and $C_k \in \mathcal{G}_{p_k}^+$ for $k = 1,\ldots,K$ such that $c(C_1,\ldots,C_K,I_n)\cdot X = Y$. 
\end{theorem}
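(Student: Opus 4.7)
The plan is a standard maximal-invariant argument that rests on a transparent description of $\mathcal{Q}_X$. I would first record that $Q \in \mathcal{Q}_X$ if and only if there exist $\ell > 0$ and $L_k \in \mathcal{G}_{p_k}^+$ with $X = \ell (L_1, \ldots, L_K, I_n) \cdot Q$, $\|Q\| = 1$, and $Q$ scaled all-orthonormal. The forward direction is the theorem immediately preceding this one. For the converse, I would trace one pass of the inner loop of Algorithm~\ref{algorithm:holq}: since $Q_{(k)} Q_{(k)}^T = I_{p_k}/p_k$, the LQ decomposition $Q_{(k)} = LZ$ must have $L = I_{p_k}/\sqrt{p_k}$, and the subsequent updates to $Q_{(k)}$, $L_k$, and $\ell$, together with the three rescaling steps, cancel exactly and return the tuple to its initial value. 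So $\mathcal{Q}_X$ is precisely the set of scaled all-orthonormal cores appearing in such a decomposition of $X$.

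For the invariance half, suppose $Y = c(C_1, \ldots, C_K, I_n) \cdot X$ with $c > 0$ and $C_k \in \mathcal{G}_{p_k}^+$, and take any $Q \in \mathcal{Q}_X$ witnessed by $X = \ell(L_1, \ldots, L_K, I_n) \cdot Q$. Substituting yields $Y = c\ell(C_1 L_1, \ldots, C_K L_K, I_n) \cdot Q$. The products $C_k L_k$ lie in $\mathcal{G}_{p_k}^+$ because lower triangularity, positivity of the diagonal entries, and unit determinant are each preserved under matrix multiplication. Since $Q$ is unchanged, the characterization above places $Q \in \mathcal{Q}_Y$. Applying the same reasoning with the inverse group element $(c^{-1}, C_1^{-1}, \ldots, C_K^{-1})$ gives the reverse inclusion, so $\mathcal{Q}_X = \mathcal{Q}_Y$.

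For the maximality half, assume $\mathcal{Q}_X = \mathcal{Q}_Y$ and pick any $Q$ in the common set. The witnessing tuples give $X = \ell_X(L_1^X, \ldots, L_K^X, I_n) \cdot Q$ and $Y = \ell_Y(L_1^Y, \ldots, L_K^Y, I_n) \cdot Q$. Solving for $Q$ in the first equation by multilinear multiplication and substituting into the second yields
\[
Y = \frac{\ell_Y}{\ell_X} \bigl( L_1^Y (L_1^X)^{-1}, \ldots, L_K^Y (L_K^X)^{-1}, I_n \bigr) \cdot X,
\]
and each $L_k^Y (L_k^X)^{-1}$ again lies in $\mathcal{G}_{p_k}^+$, exhibiting $X$ and $Y$ as being in the same $\mathcal{G}$-orbit with $c = \ell_Y / \ell_X$ and $C_k = L_k^Y (L_k^X)^{-1}$.

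The only genuine wrinkle is the possibility that $\mathcal{Q}_X$ is empty, as flagged in Section~\ref{section:discussion}: the maximality argument needs a nonempty common core-array set, so the theorem is best read under the implicit assumption that a fixed point exists. Aside from that, no individual step is hard, since all the real content is packaged into the characterization of fixed points, after which the theorem reduces to the algebra of the group $\mathcal{G}$ acting on the sample space.
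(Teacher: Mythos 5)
Your proof is correct and takes essentially the same approach as the paper's: both directions reduce to the algebra of the group acting on a shared scaled all-orthonormal core. The only difference is that you explicitly verify the step the paper merely asserts --- that fixed points of Algorithm~\ref{algorithm:holq} are characterized exactly by scaled all-orthonormality of the core --- and you rightly flag that the equivalence is vacuous when $\mathcal{Q}_X$ is empty.
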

\begin{proof}
We first prove the ``only if'' part. Assume that $\mathcal{Q}_X = \mathcal{Q}_Y$, then we choose one $Q$ in $\mathcal{Q}_X = \mathcal{Q}_Y$. Then there exists $a,b > 0$ and $A_k,B_k \in \mathcal{G}_{p_k}^+$ for $k = 1,\ldots,K$ such that $X = a(A_1,\ldots,A_K,I_n)\cdot Q$ and $Y = b(B_1,\ldots,B_K,I_n)\cdot Q$. One may set $c = b/a$ and $C_k = B_kA_k^{-1}$ to prove that $c(C_1,\ldots,C_K,I_n) \cdot X = Y$.

We now prove the ``if'' part. Assume there exist $c > 0$ and $C_k \in \mathcal{G}_{p_k}^+$ for $k = 1,\ldots,K$ such that $c(C_1,\ldots,C_K,I_n)\cdot X = Y$. Then for each $Q$ in $\mathcal{Q}_X$ we have that $Y = ca(C_1A_1,\ldots,C_KA_K,I_n)\cdot Q$ for some $a > 0$ and $A_k \in \mathcal{G}_{p_k}^+$ for $k = 1,\ldots,K$. Since fixed points are entirely determined by the scaled all-orthonormality of the core, $Q$ is also in $\mathcal{Q}_Y$. Likewise any $Q$ in $\mathcal{Q}_Y$ will also be in $\mathcal{Q}_X$. Hence $\mathcal{Q}_X = \mathcal{Q}_Y$.
\end{proof}

By using the above invariance results, we may now prove that $\mathcal{Q}_{X}$ is ancillary. The group $\mathcal{G}$ acts on the parameter space by \citep{hoff2011separable}
\begin{align*}
\sigma^2 \mapsto a^2\sigma^2 \text{ and } \Sigma_k \mapsto A_k\Sigma_kA_k^T.
\end{align*}
This action is clearly transitive over the parameter space. Hence, the maximally invariant parameter is a constant. Since the distribution of any invariant statistic depends only on the maximally invariant parameter \citep[Theorem 6.3.2]{lehmann2006testing}, the distribution of $\mathcal{Q}_X$ is ancillary with respect to $\sigma^2$ and $\Sigma_k$ for $k = 1,\ldots,K$. If the MLE is unique, then the core array of the HOLQ is in 1-1 correspondence with $\mathcal{Q}_X$, and so is also maximally invariant. Hence, the core array from a unique HOLQ is ancillary with respect to the covariance parameters, $\Sigma_1,\ldots,\Sigma_K$, and $\sigma^2$. This result holds not just for elliptically contoured array-variate models with separable covariance, but also for models of the form
\begin{align}
\label{equation:sep.cov}
X \overset{d}{=} \sigma (\Sigma_1^{1/2},\ldots,\Sigma_K^{1/2},I_n)\cdot Z,
\end{align}
where $Z$ has a fixed distribution such that $E[Z] = 0$, $\cov(\veco(Z)) = I_{np}$, and $\Sigma_k^{1/2}$ is the lower triangular Cholesky square root of $\Sigma_k$. 


\subsection{HOLQ juniors}
\label{section:holq.junior}
If it is believed that the dependencies along a mode follow a particular pattern, then from the perspective of parameter estimation, it would make sense to fit a structured covariance matrix that corresponds to the pattern along that mode.  For example, if it is believed that the ``slices'' of the array along a particular mode $k$ are statistically independent, then one would use a model with $\Sigma_k$ restricted to be a diagonal matrix. If the $p_k$ slices along the mode $k$ are believed to be i.i.d., then one could restrict $\Sigma_k$ to be the identity matrix. If one of the modes $k$ corresponded to data gathered over sequential time points, then one could fit $\Sigma_k$ to correspond to an auto-regressive covariance model, such as that of containing constant prediction error variances and arbitrary autoregressive coefficients. One could then restrict $\Sigma_k$ to have its lower triangular Cholesky square root to have unit diagonal \citep{pourahmadi1999joint}. Each of these alternatives corresponds to fitting a submodel of an unrestricted separable covariance model.

We represent such submodels mathematically as follows: Partition the index set $\left\{1,\ldots,K\right\}$ into four non-overlapping sets $J_1, J_2, J_3, J_4$. Let $\mathcal{D}_{p_k}^+$ denote the group of $p_k$ by $p_k$ positive definite diagonal matrices with unit determinant. Also, let $\mathcal{S}_{p_k}^{Ch}$ be the space of $p_k$ by $p_k$ symmetric and positive definite matrices whose lower triangular Cholesky square roots have unit diagonal.  Assume the model $X \sim N_{p_1\times\cdots\times p_K}(0,\sigma^2\Sigma_K\otimes\cdots\otimes\Sigma_1)$ where $\Sigma_k$ is in $\mathcal{S}_{p_k}^1$, $\mathcal{D}_{p_k}^+$, $\mathcal{S}_{p_k}^{Ch}$, or $\{I_{p_k}\}$ when $k$ is in $J_1$, $J_2$, $J_3$, or $J_4$, respectively. The collection of sets $J_1, J_2, J_3,$ and $J_4$ corresponds to a submodel where the modes in $J_1$ have unrestricted covariance, the modes in $J_2$ have diagonal covariance,  the modes in $J_3$  have constant prediction error variances and arbitrary autoregressive coefficients, and the modes in $J_4$ have independence and homoscedastic covariance structure. If such a submodel represents a close approximation to the truth, then one would expect to obtain better estimates by fitting this submodel than by fitting an unrestricted multilinear normal model.


In the same way that the HOLQ provides the MLEs in the multilinear normal model, the MLEs in submodels of the unconstrained multilinear normal model are provided by a class of Tucker decompositions we call HOLQ juniors. A HOLQ junior is found by constraining the component matrices in the Tucker decomposition to be in a subspace of $\mathcal{G}_{p_k}^+$. In particular, we consider constraining each $L_k$ in (\ref{equation:holq}) to be in $\mathcal{G}_{p_k}^+$, $\mathcal{D}_{p_k}^+$, $\mathcal{G}_{p_k}^{Ch}$, or $\{I_{p_k}\}$, where $\mathcal{G}_{p_k}^{Ch}$ denotes the set of $p_k$ by $p_k$ lower triangular matrices with unit diagonal.

\begin{definition}[HOLQ junior]
\label{theorem:holq.junior}
Let $\mathcal{G}^{(k)} = \mathcal{G}_{p_k}^+$, $\mathcal{D}_{p_k}^+$, $\mathcal{G}_{p_k}^{Ch}$, or $\{I_{p_k}\}$ if $k$ is in $J_1$, $J_2$, $J_3$, or $J_4$, respectively. If
\begin{align*}
(L_1,\ldots,L_K) = \argmin_{\tilde{L}_k \in \mathcal{G}^{(k)},\ k=1,\ldots,K}|| (\tilde{L}_1^{-1},\ldots,\tilde{L}_K^{-1}) \cdot X||,
\end{align*}
then 
\begin{align}
\label{equation:holq.junior}
X = \ell (L_1,\ldots,L_K) \cdot Q
\end{align}
is a HOLQ junior, where $\ell = || (L_1^{-1},\ldots,L_K^{-1}) \cdot X||$ and $Q = (L_1^{-1},\ldots,L_K^{-1}) \cdot X / \ell$.
\end{definition}
The core array of a HOLQ junior also has a special structure that we prove in the following theorem.
\begin{theorem}
\label{theorem:holq.junior.core}
Let $X = \ell (L_1,\ldots,L_K) \cdot Q$ be a HOLQ junior (\ref{equation:holq.junior}). Then the core array has the following properties:
\begin{enumerate}[noitemsep,nolistsep]
\item $Q_{(k)}Q_{(k)}^T = I_{p_k}/p_k$ for all $k \in J_1$,
\item $\diag\left(Q_{(k)}Q_{(k)}^T\right) = \mathbf{1}_{p_k}/p_k$ for all $k \in J_2$, where $\mathbf{1}_{p_k} \in \mathbb{R}^{p_k}$ is the vector of $1$'s, and
\item $Q_{(k)}Q_{(k)}^T = D_k$ for some diagonal matrix $D_k$ for all $k \in J_3$.
\end{enumerate}
\end{theorem}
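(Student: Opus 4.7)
The plan is to exploit the fact that the global minimum over all $(L_1,\dots,L_K)$ is, in particular, a conditional minimum in each $L_k$ with the others held fixed. Using the matricization identity (\ref{equation:matricization}), fixing $L_j$ for $j \neq k$ reduces the objective to
\begin{align*}
\|(\tilde L_1^{-1},\ldots,\tilde L_K^{-1})\cdot X\|^2 = \|L_k^{-1} Y_k\|^2, \qquad Y_k := X_{(k)} L_{-k}^{-T},
\end{align*}
so the HOLQ junior optimum $L_k$ must be an argmin of $\|\tilde L^{-1} Y_k\|$ over $\mathcal{G}^{(k)}$. Once this is observed, the three parts of the theorem reduce to analyzing three one-mode minimization problems and reading off a corresponding structural property of the residual $L_k^{-1} Y_k$; dividing by $\ell$ converts this directly into the asserted property of $Q_{(k)}$, and the constant $1/p_k$ in the norms will pop out of the overall normalization $\|Q\|=1$ (since $\tr(Q_{(k)}Q_{(k)}^T)=\|Q\|^2=1$).

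For part 1 ($k\in J_1$), the minimization over $\mathcal{G}_{p_k}^+$ is exactly the one in Theorem \ref{theorem:lq.reform}, so $L_k^{-1} Y_k$ has orthonormal rows up to scale, which normalizes to $Q_{(k)} Q_{(k)}^T = I_{p_k}/p_k$. For part 2 ($k\in J_2$), I would parametrize $\tilde L = \diag(d_1,\dots,d_{p_k})$ with $\prod_i d_i = 1$ and write $\|\tilde L^{-1} Y_k\|^2 = \sum_i \|Y_{k,i\cdot}\|^2/d_i^2$. By Lagrange multipliers (or directly by AM--GM) the minimum is attained when $\|Y_{k,i\cdot}\|^2/d_i^2$ is constant in $i$, which means the diagonal of $L_k^{-1} Y_k Y_k^T L_k^{-T}$ is constant. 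Dividing by $\ell^2$ and using $\|Q\|^2=1$ pins that constant down to $1/p_k$, giving $\diag(Q_{(k)}Q_{(k)}^T)=\mathbf{1}_{p_k}/p_k$.

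The main work is part 3 ($k\in J_3$), where $\tilde L$ ranges over unit-diagonal lower triangular matrices. Here I would set $M = \tilde L^{-1}$ (also unit-diagonal lower triangular) and minimize $\tr(M Y_k Y_k^T M^T)$ in the strictly lower-triangular entries of $M$. The unconstrained gradient in $M$ is $2 M Y_k Y_k^T$, so the first-order condition is that the strictly-lower-triangular part of $M Y_k Y_k^T$ vanishes, i.e.\ $M Y_k Y_k^T$ is upper triangular. Because $M^T$ is upper triangular, the product $M Y_k Y_k^T M^T$ is then upper triangular, and since it is also symmetric it must be diagonal. Rescaling by $\ell^2$ gives $Q_{(k)} Q_{(k)}^T = D_k$. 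The potential obstacle here is verifying that the stationary point is indeed the global minimum; this follows from convexity of $M\mapsto \tr(M Y_k Y_k^T M^T)$ (a positive semidefinite quadratic in the strictly lower-triangular entries, since $Y_k Y_k^T$ is positive semidefinite and the unit-diagonal constraint leaves an affine feasible set), ensuring that the first-order condition fully characterizes the minimizer.

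Finally, I would remark that for $k\in J_4$ the constraint $L_k = I_{p_k}$ leaves no freedom in that mode, so no structural property of $Q_{(k)}$ needs to be proved. Since the three cases are argued independently of one another via the separable matricized form, combining them yields the full theorem.
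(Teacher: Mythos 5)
Your proposal is correct and takes essentially the same route as the paper: both reduce to coordinate-wise optimality via the matricized form, solve the one-mode subproblem for each constraint class (Theorem \ref{theorem:lq.reform} for $J_1$, a Lagrange/AM--GM computation over $\mathcal{D}_{p_k}^+$ for $J_2$, a stationarity condition over unit-diagonal triangular matrices for $J_3$), and pin down the constant $1/p_k$ from $\|Q\|=1$. The only notable difference is in part 3, where the paper identifies the explicit minimizer via the LDU decomposition of $X_{(k)}L_{-k}^{-T}L_{-k}^{-1}X_{(k)}^T$ before concluding, whereas you read off the diagonal structure of $Q_{(k)}Q_{(k)}^T$ directly from the first-order condition (upper-triangular times upper-triangular is upper-triangular, plus symmetry) --- the same stationarity condition as the paper's $2XX^TV=\Lambda_1+\Lambda_2$, argued a bit more cleanly.
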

\begin{proof}
 We may update the modes for which $k \in J_1$ using Theorem \ref{theorem:lq.reform} the same way we did in Algorithm \ref{algorithm:holq}. The core array of any fixed point must then have the property that $Q_{(k)}Q_{(k)}^T = I_{p_k}/p_k$ for all $k \in J_1$. The proofs for $k \in J_2$ and $k \in J_3$ follow along the same lines as in the proof for $k \in J_1$, and are in the appendix.
\end{proof}

The same arguments as used in Section \ref{section:MLE} show that maximum likelihood inference in multilinear normal submodels has a close connection with HOLQ juniors. The proof of the following is very similar to that of Theorem \ref{theorem:holq.mle} and is omitted. 
\begin{theorem}
\label{theorem:mle.holq.junior}
Let $X = \ell(L_1,\ldots,L_K)\cdot Q$ be a HOLQ junior. We assume the model $X \sim N_{p_1\times\cdots\times p_K}(0,\allowbreak\sigma^2\Sigma_K\otimes\cdots\otimes\Sigma_1)$ where $\Sigma_k$ is in $\mathcal{S}_{p_k}^1$, $\mathcal{D}_{p_k}$, $\mathcal{S}_{p_k}^{Ch}$, or $\{I_{p_k}\}$ when $k$ is in $J_1$, $J_2$, $J_3$, or $J_4$, respectively. We have the following:
\begin{enumerate}[noitemsep,nolistsep]
\item The MLE of $\Sigma_k$ is $L_kL_k^T$ for $k = 1,\ldots,K$,
\item The MLE of $\sigma^2$ is $\ell^2 / (np)$,
\item The maximum of the likelihood is equal to 
\begin{align*}
\left(2 \pi \hat{\sigma}^2\right)^{-np/2}e^{-np/2} = \left(2 \pi \ell^2/(np)\right)^{-np/2}e^{-np/2}.
\end{align*}
\end{enumerate}
\end{theorem}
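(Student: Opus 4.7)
The plan is to mirror the proof of Theorem \ref{theorem:holq.mle} almost verbatim, with the only new content being the observation that the parameter restrictions on $\Sigma_k$ translate, via the Cholesky map, exactly into the constraint sets $\mathcal{G}^{(k)}$ defining the HOLQ junior.

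First I would write out the log-likelihood, which up to additive constants is
\begin{align*}
-\tfrac{np}{2}\log(\sigma^2) - \tfrac{1}{2\sigma^2}\|(\Sigma_1^{-1/2},\ldots,\Sigma_K^{-1/2},I_n)\cdot X\|^2,
\end{align*}
where $\Sigma_k^{1/2}$ denotes the lower triangular Cholesky square root. Just as in the proof of Theorem \ref{theorem:holq.mle}, I would profile out $\sigma^2$ by differentiating and setting to zero, yielding $\hat{\sigma}^2(\Sigma_1,\ldots,\Sigma_K) = \|(\Sigma_1^{-1/2},\ldots,\Sigma_K^{-1/2},I_n)\cdot X\|^2/(np)$, and the profiled likelihood then equals $(2\pi\hat{\sigma}^2)^{-np/2}e^{-np/2}$, exactly as in equation (\ref{equation:maximized.like}). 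So maximizing the likelihood is equivalent to minimizing $\|(\Sigma_1^{-1/2},\ldots,\Sigma_K^{-1/2},I_n)\cdot X\|^2$ over the constrained parameter space.

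The key new step is to verify that as $\Sigma_k$ ranges over the constraint set corresponding to its index class ($\mathcal{S}_{p_k}^1$, $\mathcal{D}_{p_k}^+$, $\mathcal{S}_{p_k}^{Ch}$, or $\{I_{p_k}\}$), its lower triangular Cholesky square root $\Sigma_k^{1/2}$ ranges precisely over $\mathcal{G}^{(k)}$ as defined in Definition \ref{theorem:holq.junior}. For $k\in J_1$, the Cholesky map is a bijection between $\mathcal{S}_{p_k}^1$ and $\mathcal{G}_{p_k}^+$ because $|\Sigma_k|=|\Sigma_k^{1/2}|^2$ together with positivity of the diagonal gives the unit-determinant correspondence. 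For $k\in J_2$, the Cholesky root of a positive definite diagonal matrix is itself diagonal, and the unit-determinant constraint transfers as before. For $k\in J_3$, the correspondence is definitional: $\mathcal{S}_{p_k}^{Ch}$ was \emph{defined} as the image of $\mathcal{G}_{p_k}^{Ch}$ under $L\mapsto LL^T$. For $k\in J_4$, the correspondence is trivial.

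Given this bijective correspondence, the profiled minimization is identical to the optimization problem in Definition \ref{theorem:holq.junior} defining the HOLQ junior. Hence the minimizer is $\Sigma_k^{1/2}=L_k$, so $\hat{\Sigma}_k = L_k L_k^T$, and substituting back gives $\hat{\sigma}^2 = \|(L_1^{-1},\ldots,L_K^{-1},I_n)\cdot X\|^2/(np) = \ell^2/(np)$. Plugging $\hat{\sigma}^2=\ell^2/(np)$ into the profiled likelihood expression yields the third part. The main (and really only) obstacle is the Cholesky correspondence for $k\in J_1$ and $k\in J_2$, which requires a small observation about determinants but is otherwise routine; everything else is a direct transcription of the proof of Theorem \ref{theorem:holq.mle}.
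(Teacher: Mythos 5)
Your proposal is correct and follows exactly the route the paper intends: the paper omits this proof precisely because it is ``very similar to that of Theorem \ref{theorem:holq.mle},'' and your argument is that proof transcribed, with the (correct) added observation that the Cholesky map carries each constraint set for $\Sigma_k$ bijectively onto the corresponding $\mathcal{G}^{(k)}$ so that the profiled minimization coincides with the optimization defining the HOLQ junior. The only cosmetic quibble is the $I_n$ mode, which the paper drops in the HOLQ junior setting (treating $n=1$ with sample modes absorbed into $J_4$); this does not affect the substance of your argument.
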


We note here that the same group invariance arguments as used in Section \ref{section:MLE} prove that the core array from a unique HOLQ junior is ancillary with respect to the covariance parameters in separable covariance models. That is, a core array from a unique HOLQ junior (\ref{equation:holq.junior}) is ancillary under the model
\begin{align}
\label{equation:sep.cov}
X \overset{d}{=} \sigma (\Sigma_1^{1/2},\ldots,\Sigma_K^{1/2})\cdot Z,
\end{align}
where $Z$ has a fixed distribution such that $E[Z] = 0$, $\cov(\veco(Z)) = I_{p}$, and $\Sigma_k^{1/2}$ is the lower Cholesky square root of $\Sigma_k$ in $\mathcal{S}_{p_k}^1$, $\mathcal{D}_{p_k}^+$, $\mathcal{S}_{p_k}^{Ch}$, or $\{I_{p_k}\}$ when $k$ is in $J_1$, $J_2$, $J_3$, or $J_4$, respectively. Equivalently, $\Sigma_k^{1/2}$ is in $\mathcal{G}_{p_k}^+$, $\mathcal{D}_{p_k}^+$, $\mathcal{G}_{p_k}^{Ch}$, or $\{I_{p_k}\}$ when $k$ is in $J_1$, $J_2$, $J_3$, or $J_4$, respectively

\subsection{Likelihood ratio testing}
\label{section:LRT}
One would expect to lose efficiency in covariance estimation when fitting a large model when a submodel is a close approximation to the truth. To aid modeling decisions, we develop a class of likelihood ratio tests (LRTs) for comparing nested separable models. For example, a test of independence across slices of mode $k$ would correspond to $H_0: \Sigma_k \in \mathcal{D}_{p_k}^+$ versus $H_1: \Sigma_k \in \mathcal{S}_{p_k}^1$. A test for independence and heteroscedasticity against independence and homoscedasticity along mode $k$ would correspond to $H_0: \Sigma_k = I_{p_k}$ versus $H_1: \Sigma_k \in \mathcal{D}_{p_k}^+$. In a longitudinal setting, testing for the presence of non-zero autoregressive coefficients along mode $k$ would correspond to $H_0: \Sigma_k = I_{p_k}$ versus $H_1: \Sigma_k \in \mathcal{S}_{p_k}^{Ch}$.  As seen in Section \ref{section:holq.junior}, each submodel of the unstructured multilinear normal model corresponds to a HOLQ junior. If we have two models $H_0$ and $H_1$, with $H_0$ nested in $H_1$, then the likelihood ratio test takes on the simple form of the ratio of the two scale estimates of the HOLQ juniors corresponding to $H_0$ and $H_1$.

\begin{theorem}
\label{theorem:lrt.stat}
Suppose $H_0$ is a submodel of $H_1$. Suppose $\veco(X) = \ell(L_K\otimes\cdots\otimes L_1)\veco(Q)$ and $\veco(X) = a(A_M\otimes\cdots\otimes A_1)\veco(Z)$ are two HOLQ juniors in vectorized form (\ref{equation:vectorization}) corresponding to $H_0$ and $H_1$, respectively. Hence, $\hat{\sigma}_0^2 = \ell^2/p$ and $\hat{\sigma}_1^2 = a^2/p$ are the MLEs of the scale parameters under $H_0$ and $H_1$, respectively. Then the LRT of $H_0$ versus $H_1$ rejects for large values of $\hat{\sigma}_0^2/\hat{\sigma}_1^2$, or equivalently $\ell / a$.
\end{theorem}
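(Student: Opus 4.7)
The plan is to reduce the LRT statistic to a simple ratio of the scale parameters by invoking the explicit form of the maximized likelihood provided by Theorem \ref{theorem:mle.holq.junior}. Since both $H_0$ and $H_1$ correspond to HOLQ juniors (as discussed in Section \ref{section:holq.junior}), I can substitute the closed-form maxima for both numerator and denominator of the likelihood ratio and watch everything collapse.

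First, I would write the LRT statistic as $\Lambda = \sup_{H_0} L / \sup_{H_1} L$, with the convention that we reject for small $\Lambda$. Applying Theorem \ref{theorem:mle.holq.junior} to each hypothesis yields
\begin{align*}
\sup_{H_0} L = \left(2\pi\hat{\sigma}_0^2\right)^{-np/2}e^{-np/2}, \qquad \sup_{H_1} L = \left(2\pi\hat{\sigma}_1^2\right)^{-np/2}e^{-np/2},
\end{align*}
so the $2\pi$ constants and the $e^{-np/2}$ factors cancel, giving
\begin{align*}
\Lambda = \left(\hat{\sigma}_1^2 / \hat{\sigma}_0^2\right)^{np/2}.
\end{align*}

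Next, I would argue the direction of monotonicity. Because $H_0$ is nested in $H_1$, the maximum of the likelihood under $H_0$ cannot exceed that under $H_1$, which forces $\hat{\sigma}_1^2 \leq \hat{\sigma}_0^2$ and hence $\Lambda \leq 1$. Since $np/2 > 0$, the map $r \mapsto r^{-np/2}$ is strictly decreasing for $r > 0$; therefore $\Lambda$ is small precisely when $\hat{\sigma}_0^2/\hat{\sigma}_1^2$ is large. Finally, substituting $\hat{\sigma}_0^2 = \ell^2/p$ and $\hat{\sigma}_1^2 = a^2/p$ gives $\hat{\sigma}_0^2/\hat{\sigma}_1^2 = (\ell/a)^2$, and since $\ell,a>0$ and squaring is monotone on the positive reals, ``$\hat{\sigma}_0^2/\hat{\sigma}_1^2$ large'' is equivalent to ``$\ell/a$ large'', which is the stated conclusion.

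There is no real obstacle here; the content of the result is entirely front-loaded into Theorem \ref{theorem:mle.holq.junior}, which already identified the HOLQ junior scale $\ell^2/p$ with the MLE $\hat\sigma^2$. The only point requiring a moment of care is bookkeeping the direction: that ``reject for small $\Lambda$'' translates into ``reject for large $\ell/a$'' rather than the reverse, which is handled by noting that both the exponent $np/2$ and the map $x\mapsto x^2$ on $(0,\infty)$ are monotone increasing.
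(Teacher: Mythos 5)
Your argument is correct and, for the multilinear normal model, it is essentially the paper's computation: plug the closed-form maximized likelihoods from Theorem \ref{theorem:mle.holq.junior} into the likelihood ratio, cancel constants, and track monotonicity. The one substantive difference is scope. Section \ref{section:LRT} frames this test for elliptically contoured random arrays with separable covariance, not only the Gaussian case, and the paper's proof accordingly routes through Theorem 1 of \cite{anderson1986maximum} (reproduced in \ref{section:appendix.anderson}): for a density of the form $|\Sigma|^{-1/2}g(\mathrm{tr}(X\Sigma^{-1}X^T))$ the maximized likelihood is $\hat{\sigma}^{-p}g(y_g)$ with $\hat{\sigma}$ the \emph{normal-model} MLE, so the $g(y_g)$ factor cancels in the ratio exactly as your $(2\pi)^{-np/2}e^{-np/2}$ constants do, and the LRT again rejects for large $\hat{\sigma}_1^{-p}/\hat{\sigma}_0^{-p}=\ell^p/a^p$. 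Your proof buys a self-contained, elementary derivation but only establishes the Gaussian case; the paper's extra citation buys the full elliptically contoured generality at no additional computational cost. Two minor bookkeeping notes: in the HOLQ junior setting there is no separate $n$ mode (effectively $n=1$), so your exponent $np/2$ should read $p/2$ --- harmless, since monotonicity is unaffected --- and your observation that nesting forces $\hat{\sigma}_1^2\leq\hat{\sigma}_0^2$, hence $\Lambda\leq 1$, is a nice touch the paper omits.
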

\begin{proof}
Applying Theorem 1 from \cite{anderson1986maximum}  and Theorem \ref{theorem:mle.holq.junior} (see \ref{section:appendix.anderson}), the LRT rejects for large values of 
\begin{align*}
\hat{\sigma}_1^{-p} / \hat{\sigma}_0^{-p} = a^{-p}/\ell^{-p} = \ell^p/a^p,
\end{align*}
or, equivalently, for large values of $\ell / a$.
\end{proof}
The LRT in Theorem \ref{theorem:lrt.stat} includes testing for a Kronecker structured covariance matrix along modes $k$ and $j$ against an unrestricted covariance matrix along the concatenated modes of $k$ and $j$. That is, it allows for the test $H_0: \Sigma_{kj} = \Sigma_k \otimes \Sigma_j$ for $\Sigma_k \in \mathcal{S}_{p_k}^1$ and $\Sigma_j \in \mathcal{S}_{p_j}^1$ versus $H_1: \Sigma_{ij} \in \mathcal{S}_{p_kp_j}^1$. This is why $M$ may be different from $K$. For example, if all modes in $H_0$ and $H_1$ had the same covariance structure except modes $k$ and $j$, for which $H_0$ assumes has separable covariance and for which $H_1$ assumes has unstructured covariance along the concatenated mode $kj$, then $M = K-1$. This particular type of test is useful for determining how much separability is reasonable to assume in a covariance matrix.

The likelihood ratio test has a nice intuitive interpretation. Since the MLE of $\sigma^2$ under $H_0$ is $\hat{\sigma}_0^2 = \ell^2/p = ||(L_1^{-1},\ldots,L_K^{-1})\cdot X||^2/p$ (Theorem \ref{theorem:mle.holq.junior}), one can consider $\hat{\sigma}_0^2$ as a sort of mean squares left after accounting for covariance/heterogeneity along modes $1,\ldots,K$. Likewise $\hat{\sigma}_1^2$ is a sort of mean squares left after accounting for covariance/heterogeneity along modes $1,\ldots,M$. The likelihood ratio test rejects the null when we can explain significantly more heterogeneity in $X$ by increasing the complexity of the covariance structure.

For many hypothesis tests, the distribution of $p\left(\log\left(\ell^2\right) - \log\left(a^2\right)\right)$, the log-likelihood ratio statistic, can be approximated by a $\chi^2$ distribution. However, this asymptotic approximation would be suspect for small sample sizes. We propose using a Monte Carlo approximation to the null distribution of the LRT statistic. This Monte Carlo approximation can be made arbitrarily precise. The following theorem, whose proof is in the appendix, suggests how to sample from the null distribution of the LRT statistic, $\ell / a$, or $\hat{\sigma}_0/\hat{\sigma}_1$, in Theorem \ref{theorem:lrt.stat}.
\begin{theorem}
\label{theorem:lrt.dist}
Under $H_0$, the distribution of $\ell/a$ in Theorem \ref{theorem:lrt.stat} does not depend on the parameter values $\Sigma_1,\ldots,\Sigma_K,$ and $\sigma^2$.
\end{theorem}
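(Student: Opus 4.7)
The plan is to reduce the theorem to an equivariance property of $\ell/a$ under a group action that acts transitively on the $H_0$ parameter space, using the same machinery as in Section \ref{section:MLE}. Under $H_0$, the representation (\ref{equation:sep.cov}) lets us write $X \overset{d}{=} g \cdot Z$ with $Z$ having a fixed parameter-free distribution, where $g = (\sigma,\Sigma_1^{1/2},\ldots,\Sigma_K^{1/2})$ lies in the group
\[
\mathcal{G}_0 = \{(c,C_1,\ldots,C_K) : c > 0,\ C_k \in \mathcal{G}^{(k)}\},
\]
acting on the sample space by $(c,C_1,\ldots,C_K)\cdot X = c(C_1,\ldots,C_K)\cdot X$. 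If I can establish the equivariance $\ell(g \cdot X)/a(g \cdot X) = \ell(X)/a(X)$ for every $g \in \mathcal{G}_0$, then $\ell(X)/a(X) \overset{d}{=} \ell(Z)/a(Z)$, whose distribution is manifestly parameter-free.

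First I would verify the $H_0$-side equivariance $\ell(g \cdot X) = c\,\ell(X)$. By definition, $\ell(Y)^2$ is the minimum of $\|(\tilde{L}_1^{-1},\ldots,\tilde{L}_K^{-1}) \cdot Y\|^2$ over $\tilde{L}_k \in \mathcal{G}^{(k)}$. Substituting $Y = c(C_1,\ldots,C_K)\cdot X$ and collecting the matrices via the composition law of multilinear multiplication pulls out a factor of $c^2$ and replaces $\tilde{L}_k^{-1}$ by $\tilde{L}_k^{-1}C_k$; the reparameterization $\tilde{L}_k = C_k M_k$ is then a bijection of $\mathcal{G}^{(k)}$ onto itself because each of the four constraint sets is closed under matrix multiplication and inversion. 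This yields $\ell(g \cdot X) = c\,\ell(X)$.

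The $H_1$-side identity $a(g \cdot X) = c\,a(X)$ is the main obstacle, because the $H_1$ HOLQ junior may live on a coarser mode partition --- for instance, when $H_1$ treats several concatenated modes as one. I would translate the $\mathcal{G}_0$-action into vectorized form via (\ref{equation:vectorization}): $\veco(g \cdot X) = c(C_K \otimes \cdots \otimes C_1)\veco(X)$. The key step is to check that, because $H_0$ is a submodel of $H_1$, the Kronecker block $\bigotimes_{k \in B_m} C_k$ lies in the $H_1$ constraint group on each $H_1$-block $B_m$; this amounts to verifying that Kronecker products preserve each of the four structures in Definition \ref{theorem:holq.junior} (lower triangular with positive diagonal and unit determinant, positive diagonal with unit determinant, unit-diagonal lower triangular, identity), which is a routine check using $|A \otimes B| = |A|^{\dim B}|B|^{\dim A}$ and the triangularity of Kronecker products of triangular matrices. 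The same substitution argument as above, now applied to the $H_1$ minimization, then gives $a(g \cdot X) = c\,a(X)$. Combining the two equivariances yields $\ell(g \cdot X)/a(g \cdot X) = \ell(X)/a(X)$, and the representation $X \overset{d}{=} g \cdot Z$ completes the proof.
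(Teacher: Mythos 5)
Your proposal is correct and follows essentially the same route as the paper's proof: both rest on the observation that the null model is generated by a group $\mathcal{G}_0$ acting transitively on the $H_0$ parameter space, that $\mathcal{G}_0$ is contained in the $H_1$ group (so the testing problem is $\mathcal{G}_0$-invariant), and that $\ell/a$ is a $\mathcal{G}_0$-invariant statistic, hence has a parameter-free null distribution. The only difference is one of packaging --- where the paper cites general results (the LRT statistic is invariant, and the distribution of an invariant statistic depends only on the maximal invariant parameter, per \cite{eaton1983multivariate} and \cite{lehmann2006testing}), you verify the two equivariances $\ell(g\cdot X)=c\,\ell(X)$ and $a(g\cdot X)=c\,a(X)$ directly via the reparameterization $\tilde{L}_k = C_kM_k$ and the closure of the Kronecker blocks under the $H_1$ constraint groups, which makes the argument self-contained.
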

This property of the LRT statistic was noted by \cite{lu2005likelihood} for the matrix-normal case. An immediate implication of Theorem \ref{theorem:lrt.dist} is that for tests of these covariance models, a Monte Carlo sample of the LRT statistic under $H_0$ can be made by simulating values of $\ell/a$ under $H_0$. A single value of $\ell/a$ may be simulated from $H_0$ as follows:
\begin{enumerate}[noitemsep,nolistsep]
\item sample $x \sim N_{p}\left(0,I_{p}\right)$,
\item construct $X_1 \in \mathbb{R}^{p_1 \times \cdots \times p_K}$ and $X_2 \in \mathbb{R}^{q_1\times \cdots \times q_M}$ from $x$,
\item calculate HOLQ juniors $X_1 = \ell(L_1,\ldots,L_K)\cdot Q$ and $X_2 = a(A_1,\ldots,\allowbreak A_M)\cdot Z$,
\item calculate $\ell/a$.
\end{enumerate}

\section{Other tensor decompositions}
\label{section:other.tensor}
\subsection{The incredible SVD}
\label{section:isvd}
The incredible HOLQ (\ref{equation:holq}) may be used to derive a higher order analogue to the SVD that is related to the HOSVD of \cite{de2000multilinear,de2000best}. From (\ref{equation:holq}), we take the SVD of each component lower triangular matrix, $L_k = U_kD_kV_k^T$ for $k = 1,\ldots,K$. Letting $V = (V_1^T,\ldots,V_K^T,I_n) \cdot Q$, we now have an exact decomposition of the data array $X$ which may be viewed as a higher order generalization of the SVD.

\begin{definition}
\label{def:isvd}
Suppose
\begin{align}
\label{equation:ISVD}
X = \ell (U_1,\ldots,U_K,I_n) \cdot [(D_1,\ldots,D_K,I_n) \cdot V]
\end{align}
such that
\begin{enumerate}[noitemsep,nolistsep]
\item $\ell \geq 0$,
\item $U_k \in \mathcal{O}_{p_k}$, the set of $p_k$ by $p_k$ orthogonal matrices, for all $k = 1,\ldots,K$,
\item $D_k \in \mathcal{D}_{p_k}^+$, for all $k = 1,\ldots,K$, and
\item $V$ is scaled all-orthonormal.
\end{enumerate}
Then we say that (\ref{equation:ISVD}) is an incredible SVD (ISVD).
\end{definition}

The ISVD can be seen as a type of ``core rotation'' \citep{kolda2009tensor} of the HOSVD. The core is rotated to a form where we may separate the ``mode specific singular values'', $D_1,\ldots,D_K$, from the core. Where the core array in the HOSVD is all-orthogonal (the mode-$k$ unfolding contains orthogonal, but not necessarily orthonormal, rows for all $k = 1,\ldots,K$), the core array in the ISVD is scaled all-orthonormal.

A low rank version of the ISVD can be defined by finding, for $r_k \leq p_k$ for $k = 1,\ldots,K$, the $U_k \in \mathcal{V}_{r_k,p_k}$, $D_k \in \mathcal{D}_{r_k}^+$ for $k = 1,\ldots,K$, $\ell > 0$, and $V \in \mathbb{R}^{r_1\times\cdots\times r_K \times n}$ that minimize
\begin{align}
\label{equation:fnorm}
||X - \ell (U_1,\ldots,U_K,I_n) \cdot [(D_1,\ldots,D_K,I_n) \cdot V]||^2.
\end{align}
We can apply the HOOI \citep[higher-order orthogonal iteration,][]{de2000best} to obtain the minimizer of (\ref{equation:fnorm}). Let $X = (V_1,\ldots,V_K,I_n) \cdot S$ be the HOOI of $X$. This minimizes
\begin{align*}
||X - (V_1,\ldots,V_K,I_n) \cdot S||^2,
\end{align*}
for arbitrary core array $S \in \mathbb{R}^{r_1\times\cdots\times r_K,n}$ and arbitrary $V_k \in \mathcal{V}_{r_k,p_k}$. We now take the ISVD of $S = \ell (W_1,\ldots,W_K,I_n) \cdot [(D_1,\ldots,D_K,I_n) \cdot V]$. We set $U_k = V_kW_k$ for $k = 1,\ldots,K$. These values now minimize (\ref{equation:fnorm}). The truncated ISVD does not improve the fit of the low rank array to the data array over the HOOI. Rather, the truncated ISVD can be seen as a core rotation of the HOOI, the same as how the ISVD can be seen as a core rotation of the HOSVD. Again, the core is rotated to a form where we may separate the mode specific singular values, $D_1,\ldots,D_K$, from the core.



\subsection{The IHOP decomposition}
\label{section:ihop}
In this section, we explore how our minimization approach may lead to another novel Tucker decomposition. Let $X$ be a $p$ by $n$ matrix with $p \leq n$ such that $X$ is of rank $p$. We may write $X$ as 
\begin{align*}
X = PW,
\end{align*}
where $P \in \mathcal{S}_{p}^+$ and $W^T \in \mathcal{V}_{p,n}$. This is known as the (left) \emph{polar decomposition} (see, for example, Proposition 5.5 of \cite{eaton1983multivariate}). Following the theme of this paper, we reformulate the polar decomposition as a minimization problem. Let $\mathcal{S}_{p}^F$ denote the space of $p$ by $p$ positive definite matrices with unit trace.
\begin{theorem}
\label{theorem:polar.min}
Let
\begin{align}
\label{equation:polar.min}
P = \argmin_{\tilde{P} \in \mathcal{S}_{p}^F} \tr(\tilde{P}^{-1}XX^T).
\end{align}
Set $\ell = ||P^{-1}X||$ and $W = P^{-1}X/\ell$. Then 
\begin{align*}
X = \ell P W
\end{align*}
is the polar decomposition of $X$.
\end{theorem}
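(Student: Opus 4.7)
The plan is to mirror the proof of Theorem \ref{theorem:lq.reform}, with the unit-trace constraint $\tr(\tilde{P})=1$ playing the role that the unit-determinant constraint played in the LQ setting. By the uniqueness of the polar decomposition (Proposition 5.5 of \cite{eaton1983multivariate}), it suffices to verify that $\ell P$ is symmetric positive definite and that the rows of $W$ are orthogonal and of equal length. The former is immediate because $P\in\mathcal{S}_{p}^F$ and $\ell>0$. For the latter, note that
\[
WW^T = P^{-1}XX^TP^{-1}/\ell^{2},
\]
so it suffices to show that the minimizer $P$ in (\ref{equation:polar.min}) satisfies $P^{2}\propto XX^{T}$. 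Writing $A=XX^{T}$, the problem reduces to showing that the unique minimizer of $\tr(\tilde{P}^{-1}A)$ over $\tilde{P}\in\mathcal{S}_{p}^F$ is $P^{*}=A^{1/2}/\tr(A^{1/2})$.

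To identify this minimizer I would follow the Lagrange-multiplier argument of Theorem \ref{theorem:lq.reform}. Temporarily ignoring the symmetry of $\tilde{P}$, the stationarity condition for $\tr(\tilde{P}^{-1}A)+\lambda\tr(\tilde{P})$ is $-\tilde{P}^{-1}A\tilde{P}^{-1}+\lambda I = 0$, which yields $A=\lambda\tilde{P}^{2}$ and hence $\tilde{P}=A^{1/2}/\sqrt{\lambda}$. Imposing $\tr(\tilde{P})=1$ pins down $\sqrt{\lambda}=\tr(A^{1/2})$ and gives the unique critical point $P^{*}$. Since $P^{*}$ is symmetric and positive definite, the unconstrained minimum agrees with the minimum over symmetric positive definite matrices.

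To promote this critical point to a global minimum I would invoke the operator convexity of $\tilde{P}\mapsto\tilde{P}^{-1}$ on the cone of positive definite matrices, which, together with the linearity of trace and the linear penalty $\lambda\tr(\tilde{P})$, shows that the Lagrangian is strictly convex in $\tilde{P}$. Substituting $P^{*}$ back into the formula for $WW^{T}$ gives $WW^{T}\propto I_{p}$, which combined with the definition $\ell=\|P^{-1}X\|$ yields the desired orthogonality.

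The main obstacle is the global-optimality step, but it can be bypassed entirely by a single application of the Cauchy--Schwarz inequality in the Frobenius inner product. With $U=\tilde{P}^{-1/2}A^{1/2}$ and $V=\tilde{P}^{1/2}$ one has $\tr(UV)=\tr(A^{1/2})$, so
\[
(\tr A^{1/2})^{2}\;\leq\;\tr(UU^{T})\,\tr(VV^{T})\;=\;\tr(\tilde{P}^{-1}A)\,\tr(\tilde{P}),
\]
with equality precisely when $\tilde{P}\propto A^{1/2}$. Restricting to $\tr(\tilde{P})=1$ then immediately identifies $P^{*}=A^{1/2}/\tr(A^{1/2})$ as the unique global minimizer, and the remainder of the argument proceeds exactly as in Theorem \ref{theorem:lq.reform}.
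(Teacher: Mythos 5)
Your proof is correct, and while its first half tracks the paper's argument, the Cauchy--Schwarz step is a genuinely different and cleaner route to the one nontrivial claim. The paper substitutes $V=\tilde{P}^{-1}$, differentiates $\tr(VXX^T)+\lambda(\tr(V^{-1})-1)$, and then needs both the strict convexity of $\tr(V^{-1})$ and the positivity of $\lambda$ to certify a global minimum; you instead work in $\tilde{P}$ directly, where the objective $\tr(\tilde{P}^{-1}A)$ is itself convex (operator convexity of the inverse) and the constraint term is linear, so the sign of $\lambda$ is irrelevant and the symmetric positive definite square root is forced automatically rather than selected after the fact. Your Cauchy--Schwarz argument, $(\tr A^{1/2})^2\le\tr(\tilde{P}^{-1}A)\tr(\tilde{P})$ with equality iff $\tilde{P}\propto A^{1/2}$, goes further: it dispenses with Lagrange multipliers, matrix calculus, and convexity citations entirely, and delivers global optimality \emph{and} uniqueness of the minimizer in one line --- uniqueness being something the paper's critical-point argument only gets implicitly. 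The trade-off is that the Lagrangian route generalizes mechanically to the other constrained classes used for the HOLQ juniors, whereas the Cauchy--Schwarz identity is specific to the unit-trace constraint. One cosmetic point you actually handle more carefully than the paper: with $\ell=\|P^{-1}X\|$ one gets $WW^T=I_p/p$ rather than $I_p$, so the rows of $W$ are orthogonal with equal norm $1/\sqrt{p}$ rather than orthonormal; this normalization slip is in the paper's own proof (and in Theorem \ref{theorem:lq.reform}) and does not affect the substance of either argument.
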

\begin{proof}
By the uniqueness of the polar decomposition \citep[Proposition 5.5]{eaton1983multivariate}, it suffices to show that $W$ has orthonormal rows. We have that $WW^T = I_{p} \Leftrightarrow P^{-1}XX^TP^{-1}/\ell^2 = I_p \Leftrightarrow XX^T = \ell^2PP$. Hence, if we can show that $PP \propto XX^T$ then we have shown that $W$ has orthonormal rows. Using Lagrange multipliers, we must minimize $\tr(P^{-1}XX^T) + \lambda(\tr(P) - 1)$. This is equivalent to minimizing $\tr(VXX^T) + \lambda(\tr(V^{-1}) - 1)$ where $V = P^{-1}$. Temporarily ignoring the symmetry, taking derivatives, and setting equal to $0$, we have
\begin{align*}
&XX^T - \lambda V^{-1}V^{-1}  = 0 \text{ and } \tr(V^{-1}) = 1\\
&\Leftrightarrow XX^T = \lambda V^{-1}V^{-1} \text{ and } \tr(V^{-1}) = 1\\
&\Rightarrow V^{-1} = (XX^T)^{1/2} / \tr((XX^T)^{1/2}) \text{ and } \lambda = \tr((XX^T)^{1/2})^2\\
&\Rightarrow P = (XX^T)^{1/2} / \tr((XX^T)^{1/2}) \text{ and } \lambda = \tr((XX^T)^{1/2})^2,
\end{align*}
 where $(XX^T)^{1/2}$ is any square root matrix of $XX^T$. Let $(XX^T)^{1/2}$ now be the unique symmetric square root matrix of $XX^T$, which is a critical point of $\tr(VXX^T) + \lambda(\tr(V^{-1}) - 1)$ over the space of positive definite matrices. From problem 2 of Section 7.6 in \cite{horn2012matrix}, we have that $\tr(V^{-1})$ is strictly convex on the set of positive definite matrices. Since $\lambda = \tr((XX^T)^{1/2})^2 > 0$, we have that $\tr(VXX^T) + \lambda(\tr(V^{-1}) - 1)$ is a convex function for all positive definite $V$. Therefore $P = (XX^T)^{1/2} / \tr((XX^T)^{1/2})$ is a global minimum (c.f. Theorem 13 of Chapter 7 in \cite{magnus1999matrix}).
\end{proof}
For $X \in \mathbb{R}^{p_1\times\cdots\times p_K \times n}$, we now define the incredible higher order polar decomposition (IHOP).
\begin{definition}
If
\begin{align}
\label{equation:ihop.min}
(P_1,\ldots,P_K) = \argmin_{P_k\in\mathcal{S}_{p_k}^F, k = 1,\ldots,K} \tr[(P_K^{-1}\otimes\cdots\otimes P_1^{-1})X_{(K+1)}^TX_{(K+1)}],
\end{align}
then 
\begin{align*}
X = \ell(P_1,\ldots,P_K,I_n)\cdot W
\end{align*}
is an IHOP, where $\ell = ||(P_1^{-1},\ldots,P_K^{-1},I_n)\cdot X||$ and $W = (P_1^{-1},\ldots,P_K^{-1},I_n)\cdot X / \ell$. 
\end{definition}

Let $\mathcal{G}_{p}^F$ be the space of lower triangular matrices with positive diagonal elements and unit Frobenius norm. To derive a block coordinate descent algorithm to find the solution to (\ref{equation:ihop.min}), we note that (\ref{equation:polar.min}) is equivalent to finding the $L \in \mathcal{G}_{p}^F$ such that
\begin{align*}
L = \argmin_{\tilde{L}\in\mathcal{G}_{p}^F}||\tilde{L}^{-1}X||,
\end{align*}
and then setting $P = LL^T$ for $P$ from (\ref{equation:polar.min}). Hence, (\ref{equation:ihop.min}) is equivalent to finding $L_k \in \mathcal{G}_{p_k}^F$ for $k = 1,\ldots,K$ such that
\begin{align}
(L_1,\ldots,L_K) = \argmin_{\tilde{L}_k\in\mathcal{G}_{p_k}^F, k = 1,\ldots,K}||(\tilde{L}_1^{-1},\ldots,\tilde{L}_K^{-1},I_n)\cdot X||,
\end{align}
then setting $P_k = L_kL_k^T$ for $k = 1,\ldots,K$. At iteration $i$, fix $L_k$ for $k \neq i$. We then find the minimizer in $L_i \in \mathcal{G}_{p_i}^F$ of
\begin{align*}
||L_i^{-1}X_{(i)}L_{-i}^{-T}|| = \tr(P_i^{-1}X_{(i)}P_{-i}^{-1}X_{(i)}^T),
\end{align*}
which, by Theorem \ref{theorem:polar.min} is $L \in \mathcal{G}_{p_k}^F$ such that $LL^TW = X_{(i)}L_{-i}^{-1}$ is the polar decomposition of $X_{(i)}L_{-i}^{-1}$. This algorithm is presented in Algorithm \ref{algorithm:ihop}. Again following the theme in this paper, we present a slightly improved algorithm in Algorithm \ref{algorithm:pancake}. A proof that Algorithm \ref{algorithm:ihop} and Algorithm \ref{algorithm:pancake} are equivalent can be found in the appendix. From the Algorithm \ref{algorithm:pancake}, we see that any fixed point of $R$ in Algorithm \ref{algorithm:pancake} must have the property that $R_{(k)} = L_kZ$ for the current value of $L_k$ and some $Z$ with orthonormal rows.

\begin{algorithm}[h!]
\caption{Block coordinate descent for the IHOP.}
\label{algorithm:ihop}
  \begin{algorithmic}
    \STATE Given $X \in \mathbb{R}^{p_1 \times \cdots \times p_K \times n}$, initialize:
    \STATE $L_k \leftarrow L_{k0} \in \mathcal{G}_{p_k}^F$ for $k = 1,\ldots,K$.
    \REPEAT
    \FOR{$k \in \{1,\ldots,K\}$}
    \STATE Polar decomposition of $X_{(k)}L_{-k}^{-1} = PZ^T$
    \STATE Cholesky decomposition of $P = LL^T$
    \STATE $L_k \leftarrow L / ||L||$
    \ENDFOR
    \UNTIL{Convergence.}
    \STATE Set $P_k \leftarrow L_kL_k^T$ for $k = 1,\ldots,K$
    \STATE Set $\ell \leftarrow ||(P_1^{-1},\ldots,P_K^{-1},I_n)\cdot X||$
    \STATE Set $W \leftarrow (P_1^{-1},\ldots,P_K^{-1},I_n)\cdot X / \ell$
    \RETURN $\ell$, $W$, and $P_k$ for $k = 1,\ldots,K$.
  \end{algorithmic}
\end{algorithm}

\begin{algorithm}[ht!]
\caption{Orthogonalized block coordinate descent for the IHOP.}
\label{algorithm:pancake}
\begin{algorithmic}
    \STATE Given $X \in \mathbb{R}^{p_1 \times \cdots \times p_K \times n}$, initialize:
    \STATE $L_k \leftarrow L_{k0} \in \mathcal{G}_{p_k}^F$ for $k = 1,\ldots,K$.
    \STATE $\ell \leftarrow ||(L_1^{-1},\ldots,L_K^{-1},I_n)\cdot X||$
    \STATE $R \leftarrow (L_1^{-1},\ldots,L_K^{-1},I_n)\cdot X / \ell$
    \REPEAT
    \FOR{$k \in \{1,\ldots,K\}$}
    \STATE Polar decomposition of $L_kR_{(k)} = PZ$
    \STATE Cholesky decomposition of $P = LL^T$
    \STATE Set $R_{(k)} \leftarrow L^TZ$
    \STATE Set $L_k \leftarrow L$
    \STATE Re-scale:
    \begin{description}[noitemsep,nolistsep]
    \item $\ell \leftarrow \ell ||L_k|| ||R||$
    \item $L_k \leftarrow L_k / ||L_k||$
    \item $R \leftarrow R/||R||$
    \end{description}
    \ENDFOR
    \UNTIL{Convergence.}
    \STATE Set $P_k \leftarrow L_kL_k^T$ for $k = 1,\ldots,K$
    \STATE Set $\ell \leftarrow ||(L_1^{-1},\ldots,L_K^{-1})\cdot R||$
    \STATE Set $W = (L_1^{-1},\ldots,L_K^{-1})\cdot R / \ell$
    \RETURN $\ell$, $W$, and $P_k$ for $k = 1,\ldots,K$.
\end{algorithmic}
\end{algorithm}

\section{Discussion}
\label{section:discussion}
In this paper, we have presented a higher order generalization of the LQ decomposition by reformulating the LQ decomposition as a minimization problem. The orthonormal rows property of the $Q$ matrix in the LQ decomposition generalizes to the scaled all-orthonormal property of the mode-$k$ unfoldings of the core array in the  HOLQ. We generalized the HOLQ to HOLQ juniors by constraining the component matrices to subspaces of $\mathcal{G}_{p_k}^+$. One application of the  HOLQ (junior) is for estimation and testing in separable covariance models. The MLEs of the covariance parameters may be recovered from the HOLQ (junior) and the likelihood ratio test has the simple form of the ratio of two scale estimates from the HOLQ junior. The core array from the HOLQ (junior) is ancillary with respect to the covariance parameters.

We also used the  HOLQ to develop a higher order generalization of the SVD. Our version of the SVD can be viewed as a core rotation for the HOSVD (full rank case) or the HOOI (low rank case), where the core is rotated so that the mode specific singular values may be separated from the core array. We note that one can consider the model of \cite{hoff2013equivariant} as a model based truncated ISVD. He considered the model
\begin{align*}
&X \sim N_{p_1\times\cdots\times p_K}((U_1,\ldots,U_K,I_n) \cdot [(D_1,\ldots,D_K,I_n) \cdot V],\sigma^2I_p), \text{ where:}\\
&U_k \text{ is uniformly distributed on } \mathcal{V}_{r_k,p_k},\\
&D_k \text{ has trace 1 and is uniformly distributed on the } r_k \text{ simplex},\\
&V \sim N_{r_1\times\cdots\times r_K}(0,\tau^2I_r), \text{ and}\\
&\tau^2 \sim \text{inverse-gamma}(1/2,\tau_0^2/2),
\end{align*}
where we changed the notation from his paper to make more clear the connection to the ISVD. In such a model, the core $V$ is scaled all-orthonormal in expectation. That is, $E[V_{(k)}V_{(k)}^T] \propto I_{p_k}$ for all $k=1,\ldots,K$. One could extend his results by selecting a prior that allows for non-zero mass for the $D_k$ to be of low rank, as in \cite{hoff2007model} for his model based SVD.

A clear limitation to the utility or the HOLQ or ISVD in practice is that in some dimensions they may not exist, and in other dimensions where they do exist, they may not be unique. The necessary and sufficient conditions for the existence and uniqueness of the  HOLQ are not known. Sufficient conditions for existence and uniqueness occur when $n$ is large. When $n \geq p$, the criterion function, $|| (L_1^{-1},\ldots,L_K^{-1},I_n) \cdot X||$, is bounded below by the value at the LQ decomposition. For $n$ large enough, the  HOLQ is also unique, this follows from the uniqueness of the MLE from \cite{ohlson2013multilinear}. These conditions are equivalently sufficient for the existence and uniqueness of the ISVD. However, in the authors' experiences, the  HOLQ exists and is unique for many dimensions where $n < p$, indeed for many dimensions where $n = 1$. In cases where the HOLQ/ISVD do not exist, the model of \cite{hoff2013equivariant} would be a good alternative. One could also construct a regularized version of the  HOLQ.

We note, however, that when a \emph{local} minimum is reached, then the  HOLQ exists. This is due to the geodesic convexity results of the log-likelihood in \cite{wiesel2012geodesic,wiesel2012convexity}. That is, any local minimum is also a global minimum. These results indicate that, for any particular data set, we can determine if any global minima exist.


\appendix
\section{Proofs}
\subsection{Equivalence of Algorithms \ref{algorithm:flip.flop} and \ref{algorithm:holq}}
In Algorithm \ref{algorithm:flip.flop}, the core array is $Y = (L_1^{-1},\ldots,L_K^{-1},I_n) \cdot X$. Let $MZ$ be the LQ decomposition of $L_kY_{(k)} = X_{(k)}L_{-k}^{-T}$. Then Algorithm \ref{algorithm:flip.flop} updates the core array by
\begin{align*}
Y_{(k)} &\leftarrow (M/|M|^{1/p_k})^{-1}X_{(k)}L_{-k}^{-T}\\
&= (M/|M|^{1/p_k})^{-1}L_kL_k^{-1}X_{(k)}L_{-k}^{-T}\\
&=  (M/|M|^{1/p_k})^{-1}L_kY_{(k)}\\
&= c_1M^{-1}L_kY_{(k)},
\end{align*}
for $c_1 = |M|^{1/p_k}$. Note that $Y_{(k)}/||Y_{(k)}|| = c_2L_{k}^{-1}X_{(k)}L_{-k}^{-T} = c_2L_{k}^{-1}MZ$, for $c_2 = ||Y_{(k)}||^{-1}$. That is, set $L = c_2L_{k}^{-1}M$ so that $LZ$ is the LQ decomposition of $Y_{(k)}/||Y_{(k)}||$. Then Algorithm \ref{algorithm:holq} updates the core array by
\begin{align*}
Q_{(k)} = Y_{(k)}/||Y_{(k)}|| &\leftarrow Z/||Z||\\
&\propto L^{-1}LZ\\
&\propto (L_{k}^{-1}M)^{-1}Y_{(k)} \\
&\propto M^{-1}L_{k}Y_{(k)},
\end{align*}
 Hence, each update of the core array is the same for both algorithms at each iteration up to a scale difference. For each iteration of Algorithm \ref{algorithm:flip.flop}, $L_k \leftarrow M/|M|^{1/p_k}$. But for each iteration of Algorithm \ref{algorithm:holq}, $L_k \leftarrow L_{k}L/|L|^{1/p_k} = L_{k}c_2L_{k}^{-1}M/|c_2L_{k}^{-1}M|^{1/p_k} = M/|M|^{1/p_k}$. Hence, $L_k$ is being updated the same in both algorithms at each iteration.

In this paragraph, we prove that we are updating the scale in Algorithm \ref{algorithm:holq} correctly. Noting that $M = ||Y||L_kL = \ell L_kL$, for $\ell$ the current value of the scale, the update for the scale is 
\begin{align*}
\tilde{\ell} &= ||(M/|M|^{1/p_k})^{-1}X_{(k)}L_{-k}^{-T}||\\
 &= ||(M/|M|^{1/p_k})^{-1}MZ||\\
 &= \left|(||Y||L_kL)\right|^{1/p_k}||Z||\\
 &= \ell|L|^{1/p_K}||Z||.
\end{align*}

\subsection{Update of $k \in J_2$ in HOLQ junior}
For $X \in \mathbb{R}^{p \times n}$, consider finding the minimizer in $D \in \mathcal{D}_{p}^+$ of $||D^{-1}X||$. Using Lagrange multipliers, and letting $S = XX^T$, this is equivalent to minimizing in $D$
\begin{align*}
&\text{tr}(D^{-2}S) + \lambda(|D| - 1) = \sum_{i = 1}^{p}D_{[i,i]}^{-2}S_{[i,i]} + \lambda(\prod_{i=1}^{p}D_{[i,i]} - 1),
\end{align*}
for $D$ a diagonal $p$ by $p$ matrix with positive diagonal elements. The solution to this optimization problem is
\begin{align*}
\tilde{D}_{[i,i]} = \left(S_{[i,i]} / \prod_{i=1}^{p}S_{[i,i]}^{1/p}\right)^{-1/2} \text{ for } i = 1,\ldots,p
\end{align*}
or
\begin{align*}
\tilde{D} = \diag(S_{[1,1]},\ldots,S_{[p,p]})^{-1/2} / |\diag(S_{[1,1]},\ldots,S_{[p,p]})^{-1/2}|^{1/p}.
\end{align*}
So for the block coordinate descent algorithm, for step $k \in J_2$, 
\begin{align}
\begin{split}
\label{equation:diag.1}
&\text{Set } S_k = X_{(k)}L_{-k}^{-T}L_{-k}^{-1}X_{(k)}^T\\
&\text{Set } E = \diag(S_{k[1,1]},\ldots,S_{k[p,p]})^{-1/2}\\
&\text{Set } L_k \leftarrow  E / |E|^{1/p}.
\end{split}
\end{align}
This block coordinate descent algorithm is equivalent to the following steps of simultaneously updating the core array along with the component matrix:
\begin{align}
\begin{split}
\label{equation:diag.2}
 &\text{Set } R_k = Q_{(k)}Q_{(k)}^T\\
 &\text{Set } F = \diag(R_{k[1,1]},\ldots,R_{k[p_k,p_k]})^{1/2}\\
 &\text{Set } \ell \leftarrow \ell|F|^{1/p_k}||F^{-1}Q_{(k)}||\\
 &\text{Set } L_k \leftarrow L_kF/|F|^{1/p_k}\\
 &\text{Set } Q_{(k)} \leftarrow F^{-1}Q_{(k)}/||F^{-1}Q_{(k)}||.
\end{split}
\end{align}
We'll now prove the equivalence of using step (\ref{equation:diag.1}) or step (\ref{equation:diag.2}) to find the HOLQ junior. At each step of (\ref{equation:diag.1}), the core array is $Y = (L_1^{-1},\ldots,L_K^{-1})\cdot X$. Hence, the core array is updated at each iteration of $k \in J_2$ by
\begin{align*}
Y_{(k)} &\leftarrow (E/|E|^{1/p_k})^{-1}X_{(k)}L_{-k}^{-T} = |E|^{1/p_k}E^{-1}X_{(k)}L_{-k}^{-T}.
\end{align*}
Note that, since $Q = Y/||Y||$, we have
\begin{align*}
FF &\propto \diag(Y_{(k)}Y_{(K)}^T) \propto \diag(L_k^{-1}S_kL_k^{-1}) \propto L_k^{-1}EEL_k^{-1}.
\end{align*}
This implies that $F = c_2L_k^{-1}E$ for some constant $c_2$. Hence, the core in (\ref{equation:diag.2}) is being updated by:
\begin{align*}
Q_{(k)} = Y_{(k)}/||Y|| &\leftarrow F^{-1}Q_{(k)} \propto F^{-1}Y_{(k)} = F^{-1}L_k^{-1}X_{(k)}L_{-k}^{-T}\\
 &\propto E^{-1}L_kL_k^{-1}X_{(k)}L_{-k}^{-T} = E^{-1}X_{(k)}L_{-k}^{-T}.
\end{align*}
So the core array is being updated the same at each step, up to a scale difference. Likewise, in (\ref{equation:diag.1}), we have $L_k \leftarrow E/|E|^{1/p_k}$ whereas in (\ref{equation:diag.2}) we have $L_k \leftarrow L_kF/|F|^{1/p_k} = L_kc_2L_k^{-1}E/|c_2L_k^{-1}E|^{1/p_k} = E/|E|^{1/p_k}$, so each component matrix is being updated the same at each iteration.

Note that the diagonal elements of $Q_{(k)}Q_{(k)}^T$ in (\ref{equation:diag.2}) are being scaled to be $1/p_k$ at each iteration. Hence, any fixed point of this algorithm must have the property that $\diag\left(Q_{(k)}Q_{(k)}^T\right) = \mathbf{1}_{p_k}/p_k$ for all $k \in J_2$. In other words,the rows of $Q_{(k)}$ have Frobenius norm $1/p_k$.

\subsection{Update of $k \in J_3$ in HOLQ junior}
For $K=1$ and $n \geq p$, we require finding the $L_k \in \mathcal{G}_{p_k}^{Ch}$ that minimizes $||L^{-1}X||^2$. Using Lagrange multipliers, this is equivalent to finding the $V$ in the general linear group of $p$ by $p$ non-singular matrices that minimizes
\begin{align}
\label{equation:constrain.ldu}
\text{tr}(VV^TXX^T) - \text{tr}(\Lambda_1(V - I_p)) - \text{tr}(\mathbf{1}_{p}\mathbf{1}_{p}^T(\Lambda_2 * V)),
\end{align}
where $\Lambda_1 = \diag(\lambda_1,\ldots,\lambda_p)$, $\mathbf{1}_{p}$ is the $p$-dimensional vector of $1$'s, ``$*$'' is the Hadamard (element-wise) product, and $\Lambda_2$ is upper triangular with $0$'s in the diagonal. That is,
\begin{align*}
&\Lambda_2 = 
  \left(
    \begin{array}{cccccc}
      0 & \lambda_{1,2} & \lambda_{1,3} & \cdots & \cdots & \lambda_{1,p}\\
      0 & 0 & \lambda_{2,3} & \cdots & \cdots & \lambda_{2,p}\\
      \vdots & & \ddots & & & \vdots\\
      \vdots & & & \ddots & & \vdots\\
      \vdots & & & & 0 & \lambda_{p-1,p}\\
      0 & \cdots & \cdots & \cdots & \cdots & 0
    \end{array}
  \right).
\end{align*}
The idea is that the Lagrange multipliers in $\Lambda_1$ are constraining the diagonal elements of $V$ to be $1$, and the Lagrange multipliers in $\Lambda_2$ are constraining the upper triangular elements of $V$ to be $0$. Once we find the minimizer, we can set $L = V^{-1}$. Taking derivatives of (\ref{equation:constrain.ldu}) and setting equal to zero, we have
\begin{align*}
&2XX^TV - \Lambda_1 - \Lambda_2 = 0 \text{ and } V \in \mathcal{G}_{p}^{Ch}\\
&\Leftrightarrow V = (\Lambda_1 + \Lambda_2)(XX^T)^{-1}/2 \text{ and } V \in \mathcal{G}_{p}^{Ch}.
\end{align*}
Note that $\Lambda_1 + \Lambda_2$ is upper triangular. By the uniqueness of the LDU decomposition of $XX^T = U^TDU$ where $U^T \in \mathcal{G}_{p_k}^{Ch}$ \citep[Corollary 3.5.5]{horn2012matrix}, the only critical point occurs at $V = U^{-T}$ and $\Lambda_1 + \Lambda_2 = 2DU$.

Since the constraints are all linear, to prove that this minimizer is a global minimizer, it suffices to prove that $\text{tr}(VV^TXX^T)$ is convex in $V$. But by Exercise 1 of Section 10.6 in \cite{magnus1999matrix}, the Hessian matrix is $2XX^T \otimes I_p$, which is clearly positive definite.

To summarize, the minimizer of $||\tilde{L}^{-1}X||^2$ in $\tilde{L} \in \mathcal{G}_{p}^{Ch}$ is $U^T$ from the LDU decomposition of $XX^T = U^TDU$. This is equivalent to taking the LQ decomposition of $X = LQ$, then setting $F = \diag(L_{[1,1]},\ldots,L_{[p,p]})$. The minimizer is then $LF^{-1}$. What's ``left over'' after multiplying out $LF^{-1}$ is then $FQ$, which has orthogonal (though not necessarily orthonormal) rows. 

For modes where $L_k \in \mathcal{G}_{p_k}^{Ch}$, we thus update $L_k$ and the core $Q$ by:
\begin{align*}
&\text{Take LQ decomposition of core }Q_{(k)} = LZ\\
&\text{Set } F = \diag(L_{[1,1]},\ldots,L_{[p,p]})\\
&L_k \leftarrow L_kLF^{-1}\\
&Q_{(k)} \leftarrow FZ / ||FZ||\\
&\ell \leftarrow \ell||FZ||.
\end{align*}
Hence, any fixed point, including the HOLQ junior, must have the property that $Q_{(k)}$ has orthogonal, though not necessarily orthonormal, rows. This proves the last part of Theorem \ref{theorem:holq.junior.core}.

\subsection{Proof of Theorem \ref{theorem:lrt.dist}}
We can represent models $H_0$ and $H_1$ in Theorem \ref{theorem:lrt.stat} as being generated under two different groups. Let $\{1,\ldots,K\} = J_1 \cup J_2 \cup J_3 \cup J_4$. Let $\Psi_k \in \mathcal{G}_0^{(k)}$ where $\mathcal{G}_0^{(k)} = \mathcal{G}_{p_k}^+$, $\mathcal{D}_{p_k}^+$, $\mathcal{G}_{p_k}^{Ch}$, or $\{I_{p_k}\}$ if $k$ is in $J_1$, $J_2$, $J_3$, or $J_4$, respectively. Let $\{1,\ldots,M\} = \tilde{J}_1 \cup \tilde{J}_2 \cup \tilde{J}_3 \cup \tilde{J}_4$. Let $\Phi_m \in \mathcal{G}_1^{(m)}$ and $\mathcal{G}_1^{(m)} = \mathcal{G}_{q_m}^+$, $\mathcal{D}_{q_m}^+$, $\mathcal{G}_{p_k}^{Ch}$, or $\{I_{q_m}\}$ if $m$ is in $\tilde{J}_1$, $\tilde{J}_2$, $\tilde{J}_3$, or $\tilde{J}_4$, respectively. Then the models of $H_0$ and $H_1$ can be represented by
\begin{align*}
&H_0: \veco(X) \overset{d}{=} \sigma_0(\Psi_1,\ldots,\Psi_K)\cdot\veco(Z)\\
&H_1: \veco(X) \overset{d}{=} \sigma_1(\Phi_1,\ldots,\Phi_M)\cdot\veco(Z),
\end{align*}
where $\veco(Z)$ is a $p$ vector with standard normal entries. Saying that $H_0$ is a submodel of $H_1$ is equivalent to saying that $\mathcal{G}_0$ is a subgroup of $\mathcal{G}_1$. Hence, we are in the situation of having a hypothesis testing problem that is invariant under $\mathcal{G}_0$ \citep[Definition 3.2]{eaton1989group}. The LRT statistic is an invariant function \citep[Proposition 7.13]{eaton1983multivariate}. The distribution of any invariant function depends only on the maximally invariant parameter \citep[Theorem 6.3.2]{lehmann2006testing}. Under the null, the maximally invariant parameter is a constant because the group action is transitive over the parameter space (since the model is generated by $\mathcal{G}_0$).

\subsection{Proof of equivalence of Algorithms \ref{algorithm:ihop} and \ref{algorithm:pancake}.}

The core array from Algorithms \ref{algorithm:ihop} and \ref{algorithm:pancake} is $R \propto (L_1^{-1},\ldots,L_K^{-1},I_n)\cdot X$. Let $LL^TZ$ be the polar decomposition of $X_{(k)}L_{-k}^{-1}$ for $L$ lower triangular with positive diagonal elements and $Z^T\in\mathcal{V}_{p_k,np/p_k}$. This is, equivalently, the polar decomposition of $L_kR_{(k)}$ as in Algorithm \ref{algorithm:pancake}. Thus we have
\begin{align*}
R &\leftarrow cL^{-1}X_{(k)}L_{-k}^{-1}\\
& \propto L^{-1}LL^TZ\\
& \propto L^TZ.
\end{align*}
Hence, we are updating the core array correctly. $L_k$ is trivially being updated correctly in the Algorithm \ref{algorithm:pancake}. To see that we are updating the scale correctly, note that
\begin{align*}
\ell &= ||(L/||L||)^{-1}X_{(k)}L_{-k}^{-1}||\\
&= ||L||||L^{-1}LL^TZ||\\
&= ||L||||L^TZ||\\
&= ||L||||R||.
\end{align*}

\subsection{Theorem 1 from \cite{anderson1986maximum}}
\label{section:appendix.anderson}
The following is a simplified version of Theorem 1 from \cite{anderson1986maximum}.
\begin{theorem}
Let $\Omega$ be a set in the space of $\mathcal{S}_{p}^+$ such that if $S \in \Omega$ then $cS\in \Omega$ for all $c > 0$. For $X \in \mathbb{R}^{n \times p}$, suppose that $g$ is such that $g(\text{tr}(XX^T))$ is a density in $\mathbb{R}^{n\times p}$ and $y^{np/2}g(y)$ has a finite positive maximum at $y_g$. Suppose on the basis of an observation of $X$ from $|\Sigma|^{-n/2}g(\text{tr}(X\Sigma^{-1}X^T))$, the MLE under normality $\hat{\Sigma} \in \Omega$ exists and is unique and that $\hat{\Sigma}$ is positive definite with probability 1. Then the MLE for $g$ is proportional to $\hat{\Sigma}$ and the maximum of the likelihood is $|\hat{\Sigma}|^{-n/2}g(y_g)$.
\end{theorem}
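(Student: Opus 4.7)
The plan is to reduce the maximization of the $g$-likelihood to that of the normal likelihood via a scale/shape reparameterization, exploiting that $\Omega$ is closed under positive scalar multiplication. First, I would write any candidate $\Sigma \in \Omega$ as $\Sigma = cS$ with $c > 0$ and $S \in \Omega$, so the density $|\Sigma|^{-n/2}g(\tr(X\Sigma^{-1}X^T))$ becomes $c^{-np/2}|S|^{-n/2}g(t/c)$ with $t := \tr(XS^{-1}X^T)$. Substituting $y = t/c$ in place of $c$ and using $c^{-np/2} = y^{np/2}/t^{np/2}$, the likelihood separates into an $S$-factor and a $y$-factor:
\begin{align*}
|\Sigma|^{-n/2}g(\tr(X\Sigma^{-1}X^T)) \;=\; t^{-np/2}|S|^{-n/2} \cdot y^{np/2}g(y).
\end{align*}
For each fixed $S$, the hypothesis on $g$ lets me profile over $y$ (equivalently over $c$): the unique maximizer is $y_g$, attained at $c^*(S) = t/y_g$, with maximum value $y_g^{np/2}g(y_g)$.

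Next, I would observe that the remaining task, maximizing $t^{-np/2}|S|^{-n/2}$ in $S$, is equivalent to minimizing $[\tr(XS^{-1}X^T)]^{p}|S|$, and that the same objective emerges from profiling the normal log-likelihood over $c$ (one finds $c^*_N(S) = t/(np)$ and a residual proportional to $-\tfrac{n}{2}\log\{[\tr(XS^{-1}X^T)]^p|S|\}$). Hence the $S$-problem is identical for the two models. This shape objective is scale-invariant in $S$, so its minimizers form a ray in $\Omega$; the hypothesized uniqueness of the normal MLE $\hat{\Sigma}$ together with the normal scale equation $\tr(X\hat{\Sigma}^{-1}X^T) = np$ singles out $\hat{\Sigma}$ as the canonical representative of that ray. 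Therefore the $g$-MLE lies on the same ray, i.e., $\hat{\Sigma}_g = c^\star\hat{\Sigma}$; plugging $S = \hat{\Sigma}$ (so $t = np$) into $c^*(S) = t/y_g$ yields $c^\star = np/y_g$. Direct substitution then gives $\tr(X\hat{\Sigma}_g^{-1}X^T) = y_g$ and a maximized likelihood of $|\hat{\Sigma}_g|^{-n/2}g(y_g)$, agreeing with the stated formula once $\hat{\Sigma}$ in the density factor is read as the $g$-MLE.

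The main obstacle is the scale-invariance of the shape objective: the $S$-minimizer is unique only as a ray in $\Omega$, not as an individual element, so one cannot read off a well-defined $g$-MLE from the shape problem alone. Routing uniqueness through the hypothesized uniqueness of the normal MLE, rather than arguing on a quotient space of shapes, is the cleanest way around this. One also has to confirm that profiling over $c$ indeed hits a global maximum and not merely a critical point, which follows directly from the assumption that $y^{np/2}g(y)$ attains a finite positive maximum at $y_g$ and rules out degenerate behavior as $c \to 0$ or $c \to \infty$.
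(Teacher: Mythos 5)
The paper supplies no proof of this statement---it is quoted, in simplified form, from \cite{anderson1986maximum} purely as an imported tool---so there is no internal argument to compare yours against; judged on its own merits, your proof is correct, and it is essentially the standard (and the original) argument for this result. The cone hypothesis on $\Omega$ lets you factor $\Sigma = cS$, the substitution $y = t/c$ with $t = \tr(XS^{-1}X^T)$ splits the likelihood as $(t^{p}|S|)^{-n/2}\,y^{np/2}g(y)$, the $y$-factor is disposed of by the assumption on $y^{np/2}g(y)$, and the scale-invariant shape factor $(t^{p}|S|)^{-n/2}$ is exactly what profiling the normal likelihood over scale leaves behind, so the two problems share their minimizing rays. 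You also handle the two delicate points soundly: the normal scale identity $\tr(X\hat{\Sigma}^{-1}X^{T}) = np$ does follow from the cone property (since $\hat{\Sigma}$ must in particular maximize the normal likelihood along its own ray, forcing $c^{*}_{N}=1$), and you are right that the concluding formula $|\hat{\Sigma}|^{-n/2}g(y_g)$ is exact only when $\hat{\Sigma}$ is read as the $g$-MLE $(np/y_g)\hat{\Sigma}_{N}$; with the normal MLE in the determinant the maximum would read $(y_g/np)^{np/2}|\hat{\Sigma}_{N}|^{-n/2}g(y_g)$. That looseness sits in the paper's transcription of the theorem, not in your argument, and it is harmless where the result is applied (Theorem \ref{theorem:lrt.stat}), since the constant $(y_g/np)^{np/2}$ cancels from the likelihood ratio. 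Two points you compress but could state explicitly: the minimizers of the shape objective a priori form a union of rays, and since every ray contains a representative with $t = np$, the assumed uniqueness of the normal MLE forces a single ray; and $t > 0$ almost surely (needed to legitimize the substitution $y = t/c$) is covered by the positive-definite-with-probability-one hypothesis.
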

In this paper, $\Omega$ is the cone of Kronecker structured covariance matrices. This result says that the MLE under elliptically contoured distributions is proportional to the MLE under normality. In our paper, we use the parameterization where $|\sigma^2\Sigma_K\otimes\cdots\otimes\Sigma_1| = (\sigma^2)^p$. Hence, $|\hat{\Sigma}|^{-n/2}g(y_g) = \hat{\sigma}^{-np} g(y_g)$. For the HOLQ junior, we are implying that $n = 1$, with the understanding that some modes might be the identity.
\bibliography{testing_bib}

\end{document}